\title{An instanton take on some knot detection results}
\author[John A. Baldwin]{John A. Baldwin}
\address{Department of Mathematics \\ Boston College}
\email{john.baldwin@bc.edu}
\author[Steven Sivek]{Steven Sivek}
\address{Department of Mathematics\\Imperial College London}
\email{s.sivek@imperial.ac.uk}
\address{Max Planck Institute for Mathematics}
\email{ssivek@mpim-bonn.mpg.de}
\thanks{JAB was supported by NSF FRG Grant DMS-1952707.}
\newtheorem*{rep@theorem}{\rep@title}
\newcommand{\newreptheorem}[2]{%
\newenvironment{rep#1}[1]{%
 \def\rep@title{#2 \ref{##1}}%
 \begin{rep@theorem}}%
 {\end{rep@theorem}}}
\newtheorem {theorem}{Theorem}
\newtheorem {lemma}[theorem]{Lemma}
\newtheorem {proposition}[theorem]{Proposition}
\numberwithin{equation}{section}
\numberwithin{theorem}{section}
\theoremstyle{definition}
\newtheorem{remark}[theorem]{Remark}
\newtheorem*{remark*}{Remark}
\newlist{pcases}{enumerate}{1}
\setlist[pcases]{
  label=\bf{Case~\arabic*:}\protect\thiscase.~,
  ref=\arabic*,
  align=left,
  labelsep=0pt,
  leftmargin=0pt,
  labelwidth=0pt,
  parsep=0pt
}
\newcommand{\case}[1][]{%
  \if\relax\detokenize{#1}\relax
    \def\thiscase{}%
  \else
    \def\thiscase{~#1}%
  \fi
  \item
}
\newcommand{\Z}{\mathbb{Z}}
\newcommand{\F}{\mathbb{F}}
\newcommand{\Q}{\mathbb{Q}}
\newcommand\hfk{\mathit{HFK}}
\newcommand\hfkhat{\widehat{\hfk}}
\newcommand\KHI{\mathit{KHI}}
\DeclareFontFamily{U}{mathx}{\hyphenchar\font45}
\DeclareFontShape{U}{mathx}{m}{n}{
      <5> <6> <7> <8> <9> <10>
      <10.95> <12> <14.4> <17.28> <20.74> <24.88>
      mathx10
      }{}
\DeclareSymbolFont{mathx}{U}{mathx}{m}{n}
\DeclareMathAccent{\widecheck}{0}{mathx}{"71}
\newcommand{\hfhat}{\widehat{\mathit{HF}}}
\newcommand{\mirror}[1]{\overline{#1}}
\newcommand{\Kh}{\mathit{Kh}}
\newcommand{\Khr}{\overline{\Kh}}
\newcommand{\dcover}{\Sigma_2}
\tikzset{every picture/.style=thick}
\tikzset{link/.style = { white, double = black, line width = 1.75pt, double distance = 1.25pt, looseness=1.75 }}
\tikzset{crossing/.style = {draw, circle, dotted, minimum size=0.5cm, inner sep=0, outer sep=0}}
\pgfplotsset{compat=1.12}
\begin{document}

\begin{abstract}
We give new proofs that Khovanov homology detects the figure eight knot and the cinquefoils, and that HOMFLY homology detects $5_2$ and each of the $P(-3,3,2n+1)$ pretzel knots.  For all but the figure eight these mostly follow the same lines as in  previous work. The key difference is that in honor of Tom Mrowka's 60th birthday, the arguments here use instanton Floer homology rather than knot Floer homology.
\end{abstract}

\maketitle
\section{Introduction}
\label{sec:intro}

Khovanov homology has been proved to detect a handful of the simplest knots, including the unknot \cite{km-unknot} and the trefoils $T(\pm2,3)$ \cite{bs-trefoil}.  Recently, Dowlin \cite{dowlin} constructed a spectral sequence from Khovanov homology to knot Floer homology, which made it possible to prove that Khovanov homology also detects the figure eight \cite{bdlls}, the cinquefoils $T(\pm2,5)$ \cite{bhs-cinquefoil}, and $5_2$ \cite{bs-nearly-fibered}.  In \cite{bs-nearly-fibered} we also used it to prove that reduced HOMFLY homology detects each of the $P(-3,3,2n+1)$ pretzel knots.

In this note, we give alternative arguments for most of these detection results, replacing knot Floer homology and Dowlin's spectral sequence with instanton knot homology and Kronheimer and Mrowka's spectral sequence from \cite{km-unknot}.  The figure eight detection result follows quickly from known facts about instanton L-space knots, including a criterion for their detection due to Li and Liang \cite{li-liang}.  For $T(\pm2,5)$, we apply this criterion together with the classification of genus-2 instanton L-space knots given in \cite[Corollary~1.8]{frw-cinquefoil}.  Our main results are the following, which do not make any use of Heegaard Floer homology.

\begin{theorem} \label{thm:main-41}
Let $K$ be a knot whose reduced Khovanov homology $\Khr(K)$ over some field is five-dimensional and supported in $\delta$-grading zero.  Then $K$ is the figure eight knot.
\end{theorem}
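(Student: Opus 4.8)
The plan is to carry out the figure-eight detection argument entirely on the instanton side, using the Kronheimer--Mrowka spectral sequence of \cite{km-unknot} in place of Dowlin's spectral sequence. Write $\Delta_K(t) = \sum_i a_i t^i$ for the symmetric, normalized Alexander polynomial of $K$. Since $\Khr(K)$ is five-dimensional and supported in a single $\delta$-grading, $K$ is Khovanov-thin, so its determinant satisfies $\det(K) = |\Delta_K(-1)| = \dim\Khr(K) = 5$. On the other hand, the Kronheimer--Mrowka spectral sequence converges from $\Khr(K)$ (after any necessary change of coefficients) to the instanton knot homology $\KHI(K)$, so $\dim\KHI(K) \le 5$; and by Lim's theorem the graded Euler characteristic of $\KHI(K)$ in its Alexander grading is $\Delta_K(t)$. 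Stringing these facts together gives
\[
5 \;=\; |\Delta_K(-1)| \;\le\; \sum_i |a_i| \;\le\; \sum_i \dim\KHI(K,i) \;=\; \dim\KHI(K) \;\le\; \dim\Khr(K) \;=\; 5,
\]
so every inequality is an equality. In particular $\dim\KHI(K) = 5$, the spectral sequence collapses, $\dim\KHI(K,i) = |a_i|$ for all $i$, the signs of the nonzero coefficients of $\Delta_K$ are pinned down (there is no cancellation in $\Delta_K(-1)$), and $\sum_i |a_i| = 5$.

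I would next pin down the Seifert genus $g = g(K)$. Since $\KHI$ detects the genus (Kronheimer--Mrowka), $g = \deg \Delta_K$, and since $\KHI$ detects fiberedness (Baldwin--Sivek, Ghosh--Li), $K$ is fibered precisely when $|a_g| = 1$. The hypothesis $\sum_i |a_i| = 5$ rules out $g = 0$, so $g \ge 1$. The heart of the proof is to exclude $g \ge 2$. A brief analysis of which symmetric Laurent polynomials meet the constraints $\sum_i |a_i| = 5$, the prescribed sign pattern, and $\Delta_K(1) = 1$ shows first that necessarily $|a_g| = 1$, so $K$ is fibered, and then that if $g \ge 2$ one of two things happens: either $\Delta_K$ has a large central coefficient, which is impossible because a Khovanov-thin knot has no such gap in its Alexander polynomial; or $\dim\KHI(K,i) \le 1$ for every $i$, i.e.\ $K$ is an instanton $L$-space knot. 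In the latter case the criterion of Li and Liang \cite{li-liang} together with the classification of genus-two instanton $L$-space knots in \cite[Corollary~1.8]{frw-cinquefoil} forces $K = T(2,5)$ or $K = T(-2,5)$; but the reduced Khovanov homology of $T(\pm 2, 5)$, while five-dimensional, is not supported in $\delta$-grading zero, contrary to hypothesis. Hence $g = 1$.

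Finally, with $g = 1$ the identity $\Delta_K(t) = a_1(t + t^{-1}) + a_0$, together with $2|a_1| + |a_0| = 5$, $2a_1 + a_0 = \Delta_K(1) = 1$, and $\operatorname{sign}(a_1) = -\operatorname{sign}(a_0)$, has the unique solution $a_1 = -1$, $a_0 = 3$; that is, $\Delta_K(t) = -t + 3 - t^{-1}$. In particular $\dim\KHI(K,1) = |a_1| = 1$, so $K$ is a genus-one fibered knot. It is classical that the only genus-one fibered knots in $S^3$ are the two trefoils and the figure eight, and since $\dim\Khr(T(\pm 2, 3)) = 3 \ne 5$, we conclude that $K$ is the figure eight. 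The one genuinely delicate step is the genus bound: everything else is bookkeeping with the Euler characteristic and with the genus- and fiberedness-detection properties of $\KHI$, but excluding $g \ge 2$ really does require the structural theory of instanton $L$-space knots, and it uses the hypothesis that the $\delta$-grading is zero---not merely that $\Khr(K)$ is thin---since $T(2,5)$ is itself a Khovanov-thin knot with five-dimensional reduced Khovanov homology.
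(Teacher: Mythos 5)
Your broad strategy --- convert the five-dimensional Khovanov hypothesis into $\dim\KHI(K;\Q)=5$ via the Kronheimer--Mrowka spectral sequence, then analyze the Alexander-graded decomposition --- is exactly the spirit of the paper (Propositions 2.3 and 2.4), and your chain of inequalities pinning down $\dim\KHI(K,i)=|a_i|$ is correct and well presented. But the step you yourself flag as the delicate one, excluding $g(K)\ge 2$, has a genuine hole.

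The issue is that once you reach the conclusion that $K$ (or its mirror) is an instanton L-space knot with
\[
\KHI(K;\Q) \cong \Q_g \oplus \Q_{g-1} \oplus \Q_0 \oplus \Q_{1-g} \oplus \Q_{-g},
\]
you invoke the Farber--Reinoso--Wang classification of genus-\emph{two} instanton L-space knots to conclude $K=T(\pm 2,5)$. That classification only applies when $g=2$; it says nothing if $g\ge 3$, and nothing in your argument rules out $g\ge 3$. Indeed, Theorem~\ref{thm:main-other} in the paper shows that the 5-dimensionality of $\Khr$ together with $\det(K)=5$ still leaves room for $g(K)=4$ in principle, and ruling that out takes the entire cyclotomic-polynomial analysis of Section~\ref{sec:higher-genus} --- so you cannot jump from ``$g\ge 2$ and $K$ is an instanton L-space knot'' to ``$g=2$.'' The paper avoids this problem entirely by using the Rasmussen $s$-invariant (Proposition~\ref{prop:s-genus}): the hypothesis that $\Khr(K;\Q)$ is supported in the single $\delta$-grading $\sigma=0$ forces $s(K)=2\sigma=0$; on the other hand, an instanton L-space knot is strongly quasipositive, hence $s(K)=2g(K)>0$, and similarly $s(K)=-2g(K)<0$ if the mirror is the L-space knot. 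This rules out \emph{every} $g\ge 2$ at once, uniformly, with no need for any classification of L-space knots or for FRW. That is really the key new idea of the paper's proof, and it is missing from yours.

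A secondary, smaller issue: in your case analysis you dismiss the possibility $\Delta_K(t)=\pm(t^g+t^{-g})\mp 3$ (for $g\ge 2$) by asserting that ``a Khovanov-thin knot has no such gap in its Alexander polynomial.'' I do not know that this is an established fact purely on the Khovanov side, and as stated it risks smuggling in a Heegaard Floer input (via thin knots having thin $\hfkhat$). The clean instanton-side reason this case cannot occur is \cite[Theorem~1.7]{bs-trefoil}: a fibered knot of genus $g\ge 2$ has $\dim\KHI(K,g-1)\ge 1$, hence $a_{g-1}\ne 0$, which contradicts the gap in that candidate polynomial. You should cite that instead.
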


\begin{theorem} \label{thm:main-T25}
Let $K$ be a knot whose reduced Khovanov homology $\Khr(K)$ over some field is five-dimensional and supported in the single $\delta$-grading $\pm2$.  Then $K = T(\pm2,5)$.
\end{theorem}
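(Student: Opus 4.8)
After replacing $K$ by its mirror if necessary, I will assume the $\delta$-grading is $+2$; since mirroring negates the $\delta$-grading and $\mirror{T(2,5)} = T(-2,5)$, establishing that this hypothesis forces $K = T(2,5)$ also handles the case of $\delta$-grading $-2$. The plan is to push the Khovanov hypothesis through Kronheimer and Mrowka's spectral sequence in order to understand $\KHI(K)$ completely, to recognize $K$ as a genus-$2$ instanton L-space knot, and then to quote the classification of those from \cite[Corollary~1.8]{frw-cinquefoil}.

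First I would record the classical consequences of thinness. Since $\Khr(K;\F)$ is five-dimensional and concentrated on the single diagonal $\delta = 2$, standard properties of thin knots give $\det(K) = 5$ and $\dim_\C\Khr(K;\C) = 5$, and pin the Rasmussen invariant to $|s(K)| = 4$ (equivalently $|\sigma(K)| = 4$); in particular $g(K) \ge g_4(K) \ge 2$. Then I would turn to instanton homology: Kronheimer--Mrowka's spectral sequence \cite{km-unknot} from reduced Khovanov homology to $\KHI$ yields $\dim_\C\KHI(K) \le 5$, and because $\Khr(K)$ lies on one diagonal the spectral sequence collapses, so $\KHI(K)$ is itself thin, with graded Euler characteristic $\pm\Delta_K(t)$.

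Next I would pin down the Alexander grading. On the one hand $\KHI$ detects the Seifert genus, so $\KHI(K,g(K)) \ne 0$; on the other hand, using the thin structure of $\KHI(K)$ together with the bound $\dim_\C \KHI(K)\le 5$ and the instanton counterpart of the spectral sequence $\hfkhat(K)\Rightarrow\hfhat(S^3)$ (whose differential, on a thin complex, can only drop the Alexander grading by one and whose surviving generator must lie in the Alexander grading matching $\delta_{Kh}=2$), I would argue that $g(K) \le 2$. With the lower bound this gives $g(K) = 2$, and then thinness (each Alexander summand carrying nonzero Euler characteristic of absolute value equal to its dimension), symmetry, and $\Delta_K(1) = 1$ leave only one possibility: $\dim_\C\KHI(K) = 5$, one-dimensional in each Alexander grading $-2,-1,0,1,2$, with $\Delta_K(t) = t^2 - t + 1 - t^{-1} + t^{-2}$.

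At this point $\dim_\C\KHI(K) = 5 = 2g(K)+1$, so Li--Liang's detection criterion \cite{li-liang} identifies $K$ as an instanton L-space knot, and since it has genus $2$, \cite[Corollary~1.8]{frw-cinquefoil} gives $K = T(2,5)$. I expect the main obstacle to be the middle step: the spectral sequence supplies only the inequality $\dim_\C\KHI(K)\le 5$, so extracting the exact conclusion that $g(K)=2$ with every Alexander summand one-dimensional genuinely requires transporting both the $s$-invariant bound and the thin structure to the instanton side — all while keeping careful track of mirroring conventions, so that it is $K$ itself, rather than $\mirror K$, that emerges as the instanton L-space knot.
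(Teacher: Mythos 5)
Your overall plan --- use the Kronheimer--Mrowka spectral sequence to bound $\dim \KHI(K)$, identify $K$ as an instanton L-space knot, and then quote Farber--Reinoso--Wang --- matches the paper's, and the chain of citations (Lemma~\ref{lem:det-k}, \cite{km-unknot}, \cite{li-liang}, \cite{frw-cinquefoil}) is the right one. But the ``middle step'' you flagged as the likely obstacle is in fact where the proposal breaks down, in two ways.

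First, you assert that because $\Khr(K)$ is concentrated on a single diagonal, the Kronheimer--Mrowka spectral sequence must collapse and hence $\KHI(K)$ is itself ``thin.'' That collapse is not established; the spectral sequence lands in $I^\natural(\mirror K)$, which does not carry a $\delta$-grading comparable to the one on $\Khr$, so thinness of the $E_2$ page gives you no collapse for free. It is also unnecessary: the paper (Proposition~\ref{prop:dim-khi}) obtains $\dim \KHI(K) = 5$ outright from the inequality $\dim \KHI(K) \le \dim \Khr(K;\Q) = 5$, using that $\dim \KHI(K)$ is odd and cannot be $1$ or $3$ (unknot and trefoil detection).

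Second, your route to $g(K) \le 2$ --- via ``the instanton counterpart of the spectral sequence $\hfkhat(K) \Rightarrow \hfhat(S^3)$'' together with the claim that a surviving generator sits in the Alexander grading matching $\delta_{Kh} = 2$ --- is not a valid argument: again, $\KHI$ carries no $\delta$-grading that talks to the Khovanov one, so there is no mechanism for transporting the diagonal $\delta = 2$ into an Alexander-grading constraint. The paper's actual argument runs in the opposite logical order. From $\dim\KHI(K)=5$ and $\det(K) \ne 1$, Proposition~\ref{prop:41-or-lspace} forces
\[ \KHI(K) \cong \Q_g \oplus \Q_{g-1} \oplus \Q_0 \oplus \Q_{1-g} \oplus \Q_{-g} \]
for some $g \ge 2$, using fiberedness detection, symmetry, and the bound $\dim\KHI(K,g-1)\ge 1$ from \cite{bs-trefoil}; Li--Liang's criterion \cite[Theorem~1.4]{li-liang} is then applied \emph{before} the genus is known, to conclude that $K$ or its mirror is an instanton L-space knot. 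Only at that point does one get $g(K) = 2$: instanton L-space knots are strongly quasipositive, so $s = 2g$ for the correct mirror (Proposition~\ref{prop:s-genus}), and $s(K) = 2\sigma = 4$ from Lemma~\ref{lem:det-k}. So the $s$-invariant pins down the genus only \emph{after} strong quasipositivity is in hand --- it supplies the needed upper bound $g(K) \le 2$, which your spectral-sequence heuristic cannot. If you reorganize your argument to first establish the $\KHI$ structure for arbitrary $g \ge 2$, then apply Li--Liang, and only then invoke $s(K) = 4$ together with strong quasipositivity, the proof goes through exactly as in the paper.
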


Here the $\delta$-grading on reduced Khovanov homology is defined by $\delta = q/2-h$, where $q$ and $h$ are the quantum and homological gradings, respectively. 

\begin{theorem} \label{thm:main-52}
Let $K$ be a knot whose reduced HOMFLY homology $\bar{H}(K;\Q)$ is isomorphic to $\bar{H}(5_2;\Q)$ as triply-graded vector spaces.  Then $K = 5_2$.
\end{theorem}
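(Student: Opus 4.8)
The plan is to run the argument used for $5_2$ in the Heegaard Floer setting in \cite{bs-nearly-fibered}, with instanton knot homology $\KHI$ in place of $\hfkhat$ at every step. First I would read off the classical invariants. Since $\bar{H}(K;\Q)\cong\bar{H}(5_2;\Q)$ as triply-graded vector spaces, the graded Euler characteristic shows that $K$ and $5_2$ have the same HOMFLY polynomial, and specializing this recovers $\Delta_K(t)=2t-3+2t^{-1}$, $\det(K)=7$, and---using the homological grading, which also records the chirality---$\sigma(K)=\sigma(5_2)$; in particular $g(K)\ge 1$. The goal is then to prove that $K$ is a genus-one \emph{nearly fibered} knot, meaning $\dim_\C\KHI(K,1)=2$, and to identify it by invoking the classification of such knots.

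To bound $\KHI(K)$ I would first note that one computes $\dim_\Q\bar{H}(5_2;\Q)=7=\det(5_2)$, so $\dim_\Q\bar{H}(K;\Q)=7$. The $\mathfrak{sl}_2$-differential on reduced HOMFLY homology then gives $\dim_\Q\Khr(K;\Q)\le 7$, and since $\dim_\Q\Khr(K;\Q)\ge\det(K)=7$ always, this differential vanishes, so $\Khr(K;\Q)\cong\Khr(5_2;\Q)$ as bigraded vector spaces. After passing to the coefficient field over which Kronheimer and Mrowka's spectral sequence from \cite{km-unknot} is defined (a point that needs a little care), that spectral sequence yields $\dim\KHI(K)\le 7$. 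On the other hand $\KHI$ detects the Seifert genus (Kronheimer--Mrowka) and its graded Euler characteristic is $\Delta_K$ up to the usual normalization; comparing these, $g(K)\ge 2$ would force $\dim\KHI(K)\ge 9$, a contradiction, so $g(K)=1$. Then $\KHI(K)$ is supported in Alexander gradings $-1,0,1$, where its dimensions are at least $2,3,2$; since the total is at most $7$ these bounds are equalities, whence $\dim\KHI(K)=7$ and $\dim_\C\KHI(K,1)=2$.

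It remains to identify a genus-one knot $K$ with $\dim_\C\KHI(K,1)=2$. The rank-two condition says exactly that the sutured manifold obtained by cutting the complement of $K$ along a genus-one Seifert surface has two-dimensional sutured instanton homology; by the surface-decomposition machinery for $\SHI$ this carries the same topological content (``almost fibered'') that the corresponding sutured Heegaard Floer statement does, so feeding it into the classification of genus-one nearly fibered knots from \cite{bs-nearly-fibered} places $K$ on an explicit list. On that list, the constraints already obtained---that $\Delta_K=2t-3+2t^{-1}$ (this alone kills, for instance, the $P(-3,3,2n+1)$ pretzel knots, whose determinant is $9$), that $\Khr(K;\Q)\cong\Khr(5_2;\Q)$, and, if any ambiguity remains, the full triply-graded $\bar{H}(K;\Q)$---leave only $5_2$ and $\mirror{5_2}$, and $\sigma(K)=\sigma(5_2)$ rules out the mirror. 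Hence $K=5_2$.

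I expect the main obstacle to be the last step: one must verify that the two-dimensionality of the sutured instanton homology of the cut-open complement genuinely supplies the topological input to the sutured-manifold hierarchy argument of \cite{bs-nearly-fibered}---in effect, one needs the $\SHI$-analogues of the sutured Heegaard Floer facts used there, above all that $\SHI$ detects product sutured manifolds and transforms predictably under surface decompositions. A secondary technical point is passing from the rational dimension of $\bar{H}(K)$ to a bound on the dimension of $\Khr(K)$ over the coefficient field of the spectral sequence of \cite{km-unknot}.
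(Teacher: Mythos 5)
Your proposal follows the same route as the paper: extract $\Delta_K(t)=2t-3+2t^{-1}$ and $\det(K)=7$ from the HOMFLY polynomial, use the vanishing of the $\mathfrak{sl}_2$-differential together with the Kronheimer--Mrowka spectral sequence from \cite{km-unknot} to show $\dim\KHI(K;\Q)=7$ with $g(K)=1$ and $\dim\KHI(K,1)=2$, then feed this into the classification of genus-one nearly fibered knots. The ``main obstacle'' you flag---an $\SHI$-analogue of the sutured Heegaard Floer input to the classification in \cite{bs-nearly-fibered}---is real, but has already been resolved: Li and Ye \cite{li-ye-nearly-fibered} show that $\dim\KHI(K,1)=2$ forces the sutured complement of a genus-one Seifert surface to be one of the same two sutured manifolds arising in the $\hfkhat$ case, and the subsequent identification of the realizing knots in \cite[Theorems~6.1 and 7.1]{bs-nearly-fibered} is purely topological and carries over verbatim. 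With your Alexander polynomial the resulting list (up to mirrors) is $5_2$, $15n_{43522}$, and $16n_{696530}$; the latter two have $\dim\Khr(\cdot;\Q)$ equal to $17$ and $25$, so they are eliminated by the constraint $\dim\Khr(K;\Q)=7$ exactly as you anticipated, and the mirror of $5_2$ is eliminated via the signature as you say. One minor simplification: the spectral sequence of \cite[Proposition~1.2]{km-unknot} is already stated with $\Q$ coefficients, so no change of coefficient field is required.
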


\begin{theorem} \label{thm:main-pretzel}
Let $K$ be a knot, and suppose for some $n\in\Z$ that $\bar{H}(K;\Q)$ is isomorphic to $\bar{H}(P(-3,3,2n+1);\Q)$ as triply-graded vector spaces.  Then $K = P(-3,3,2n+1)$.
\end{theorem}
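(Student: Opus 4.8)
The plan is to transport the argument of \cite{bs-nearly-fibered} to instanton Floer homology: from the triply-graded isomorphism $\bar H(K;\Q)\cong\bar H(P(-3,3,2n+1);\Q)$ I would extract enough information to recognize $K$ as a genus-one nearly-fibered knot, and then combine the resulting constraints on the sutured instanton homology of its complement with a classification of such knots to conclude $K=P(-3,3,2n+1)$.

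First I would pass from reduced HOMFLY homology to more classical data. Taking graded Euler characteristics of the appropriate specializations shows that $K$ has the same HOMFLY, Jones, and hence Alexander polynomials as $P(-3,3,2n+1)$; in particular $\Delta_K$ has degree one, with leading coefficient $\pm2$. Since the graded Euler characteristic of $\KHI(K)$ recovers $\Delta_K$, it follows that $g(K)\ge 1$, and that $\dim_\Q\KHI(K,1)\ge 2$ provided $g(K)=1$, which in turn forces $K$ to be non-fibered by the fibration-detection theorem for instanton knot homology. To bound the total rank of $\KHI(K)$ I would chain two spectral sequences: the one from reduced HOMFLY homology to reduced Khovanov homology, which gives $\dim\Khr(K)\le\dim_\Q\bar H(P(-3,3,2n+1);\Q)$, followed by the Kronheimer--Mrowka spectral sequence of \cite{km-unknot} from $\Khr(K)$ to $\KHI(K)$. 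Combined with the per-Alexander-grading lower bounds coming from $\Delta_K$, the genus-detection theorem for $\KHI$, and the symmetry $\KHI(K,i)\cong\KHI(K,-i)$, this bookkeeping should leave only the possibility $g(K)=1$ and pin down $\dim_\Q\KHI(K,1)=2$; that is, $K$ is a genus-one nearly-fibered knot.

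The heart of the proof is then an instanton-homological classification of genus-one nearly-fibered knots, carried out as in \cite{bs-nearly-fibered} but with Kronheimer--Mrowka's sutured instanton homology $\SHI$ in place of sutured Heegaard Floer homology. Decomposing the complement of a genus-one Seifert surface for $K$ along that surface and using that $\dim_\Q\KHI(K,1)=2$ is the smallest value possible for a non-fibered genus-one knot, one finds that the resulting sutured manifold is as close to a product as possible; the classification of such ``almost product'' sutured manifolds then constrains $K$, up to mirroring, to the knot $5_2$ and the family $\{P(-3,3,2m+1):m\in\Z\}$ --- a family that is itself closed under mirroring, via $m\mapsto -m-1$. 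Finally, since $\bar H(K;\Q)$ determines the Jones polynomial of $K$, and the knots on this list are pairwise distinguished by their Jones polynomials, we conclude $K=P(-3,3,2n+1)$.

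I expect the classification step to be the main obstacle: pushing the sutured-decomposition analysis of \cite{bs-nearly-fibered} through instanton Floer homology requires the behavior of $\SHI$ under the relevant sutured decompositions and gluings, a product-detection theorem in the instanton setting, and a careful enumeration of the genus-one sutured complements that can occur, all arranged so that no information is lost relative to the Heegaard Floer argument. A secondary point to handle with care is the bookkeeping of the second step: one must verify that the bound on $\dim\Khr(K)$ coming from HOMFLY homology, together with the Euler-characteristic constraints, genuinely excludes a sporadic higher-genus knot sharing the HOMFLY homology of $P(-3,3,2n+1)$.
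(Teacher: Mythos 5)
Your overall strategy matches the paper's: pass from the triply-graded isomorphism to constraints on $\Delta_K$, $\Khr(K)$, and ultimately $\KHI(K)$ via the Rasmussen and Kronheimer--Mrowka spectral sequences; conclude that $K$ is a genus-one knot with $\dim\KHI(K,1)=2$; and invoke a classification of such knots. On the classification step, you correctly identify the required input, but you need not re-derive it: Li and Ye (building on \cite{bs-nearly-fibered}) have already carried out exactly this $\SHI$-based classification, and the upshot is that, for $\Delta_K(t)=-2t+5-2t^{-1}$, the knot $K$ is (up to mirroring) either some $P(-3,3,2m+1)$ \emph{or the sporadic knot $15n_{115646}$}. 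You omit the latter from your list; it is ruled out because $\dim\Khr(15n_{115646};\Q)=23$, whereas your chain of spectral sequences forces $\dim\Khr(K;\Q)=\det(K)=9$.

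The genuine gap is the final step. The pretzel knots $P(-3,3,2m+1)$ are \emph{not} pairwise distinguished by their Jones polynomials --- indeed they are not even distinguished by their HOMFLY polynomials, which is precisely why the paper works with the categorified invariant rather than the polynomial in the first place. So from ``$K=P(-3,3,2m+1)$ for some $m$'' you cannot conclude $m=n$ by comparing Jones polynomials. The paper instead observes that since $\dim\bar H(K;\Q)=\dim\bar H_2(K;\Q)=9$, the $N=2$ Rasmussen spectral sequence must collapse at the $E_1$ page, and by \cite[Theorem~1]{rasmussen-differentials} the bigrading on $\bar H_2(K)\cong\Khr(K)$ is then determined by the triple grading on $\bar H(K)$. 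This upgrades the hypothesis to a \emph{bigraded} isomorphism $\Khr(K;\Q)\cong\Khr(P(-3,3,2n+1);\Q)$, and one then cites the computations of Starkston or Hedden--Watson showing that these pretzels \emph{are} pairwise distinguished by their bigraded Khovanov homologies, forcing $m=n$. Your argument needs this replacement to close.
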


\begin{remark}
We should emphasize that the proofs of Theorem~\ref{thm:main-T25} and \ref{thm:main-pretzel} are not really new; they merely replace the parts of the arguments in \cite{bhs-cinquefoil,bs-nearly-fibered} which involve knot Floer homology.  By contrast, the proof of Theorem~\ref{thm:main-41} is genuinely different from the one in \cite{bdlls}.  We also point out that we do not know how to prove that Khovanov homology detects $5_2$, as in \cite{bs-nearly-fibered}, without appealing to Heegaard Floer homology.
\end{remark}

\begin{remark}
Beliakova, Putyra, Robert, and Wagner \cite{bprw} recently established a new spectral sequence relating HOMFLY homology with knot Floer homology. One can use their construction in place of Dowlin's spectral sequence, together with our results in \cite{bs-nearly-fibered}, to give additional alternative proofs of Theorems~\ref{thm:main-52} and \ref{thm:main-pretzel}.
\end{remark}

Building on Theorems~\ref{thm:main-41} and \ref{thm:main-T25}, we note that a knot $K$ with $\dim \Khr(K)=5$ satisfies $\det(K)=5$ if and only if $\Khr(K)$ is supported entirely in $\delta$-gradings of a single parity (see Lemma~\ref{lem:det-k}).  We do not claim here to classify all knots $K$ for which $\Khr(K)$ is 5-dimensional and $\det(K)=5$, but we do come close by allowing some Heegaard Floer input.

\begin{theorem} \label{thm:main-other}
Let $K$ be a knot other than the figure eight or $T(\pm2,5)$, and suppose that $\dim \Khr(K;\Z/2\Z) = 5$ and $\det(K)=5$.  Then $K$ is hyperbolic, with Seifert genus 4 and Alexander polynomial
\[ \Delta_K(t) = t^4 - t^3 + 1 - t^{-3} + t^{-4}. \]
Moreover, either $K$ or its mirror is an instanton L-space knot, hence fibered and strongly quasipositive, with signature $\pm8$.
\end{theorem}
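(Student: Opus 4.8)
The strategy is to transport the hypotheses into instanton knot homology via the Kronheimer--Mrowka spectral sequence, use the genus- and fibredness-detection properties of $\KHI$ together with the arithmetic of the Alexander polynomial to pin down $g(K)$ and $\Delta_K$, recognize $K$ as an instanton L-space knot, and then read off the remaining invariants, invoking Heegaard Floer input only at the end. Throughout, "thin" means supported in a single $\delta$-grading.

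First, $\det(K) = 5 = \dim\Khr(K;\Z/2)$ forces, by Lemma~\ref{lem:det-k}, that $\Khr(K;\Z/2)$ lies in $\delta$-gradings of a single parity; and since $K$ is neither $4_1$ nor $T(\pm2,5)$, Theorems~\ref{thm:main-41} and~\ref{thm:main-T25} prevent it from being concentrated in a single $\delta$-grading, so $\Khr(K)$ is not thin. The Kronheimer--Mrowka spectral sequence with $E_2$-page $\Khr(K)$ (over a field) and limit $\KHI(K)$ now gives $\dim\KHI(K)\le 5$, and its total dimension is odd because the graded Euler characteristic of $\KHI(K)$ is $\Delta_K(t)$ with $\Delta_K(1)=1$. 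As instanton knot homology detects the unknot~\cite{km-unknot} and the trefoils~\cite{bs-trefoil}, whose determinants are $1$ and $3$, we conclude $\dim\KHI(K)=5$ and the spectral sequence degenerates; in particular $\dim_\Q\Khr(K)=5$ as well.

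Next one determines how the five-dimensional group $\KHI(K)$ is distributed over Alexander gradings: the decomposition is symmetric, its alternating sum of ranks equals $\Delta_K(1)=1$, and the evaluation $\Delta_K(-1)$ has absolute value $\det(K)=5$; moreover $\KHI$ detects the Seifert genus and detects fibredness, and $\Khr(K)$ is not thin. Running through the finitely many decompositions compatible with all of this --- in particular excluding the "figure-eight-type" and "$T(\pm2,5)$-type" shapes in low genus, and Alexander polynomials of the same total rank such as $3-t^j-t^{-j}$ with $j$ odd and $-t^3+t^2+1+t^{-2}-t^{-3}$ --- should leave only rank $1$ in each of the Alexander gradings $0,\pm3,\pm4$. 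This yields $g(K)=4$, that $K$ is fibred, and that $\Delta_K(t)=t^4-t^3+1-t^{-3}+t^{-4}$. This is the main obstacle: the systematic exclusion of the low-genus decompositions and the "wrong" Alexander polynomials, which is exactly where the non-thinness of $\Khr(K)$, the exclusion of $4_1$ and $T(\pm2,5)$, and the Heegaard Floer input do the work.

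Finally, since $\KHI(K)$ then has the Alexander-graded shape of an instanton L-space knot of genus $4$, the detection criterion of Li and Liang~\cite{li-liang} shows that $K$ or its mirror is an instanton L-space knot, and hence is fibred and strongly quasipositive; the value of the signature is then pinned to $\pm 8$, using Heegaard Floer input (for instance via the double branched cover $\Sigma_2(K)$, which has $|H_1|=5$ and, by the Ozsv\'ath--Szab\'o spectral sequence from $\Khr(\mirror{K};\Z/2)$, is forced to be an L-space). Moreover $K$ is fibred of genus $4$ but is not a torus knot --- the only genus-$4$ torus knots $T(2,9)$ and $T(3,5)$ have determinants $9$ and $1$ --- and is not a satellite, since the only genus-$4$ satellite that is an L-space knot is the $(2,5)$-cable of the trefoil, which fails the hypothesis because its reduced Khovanov homology has dimension greater than $5$; verifying this last point is a secondary obstacle. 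By Thurston's trichotomy for knot complements, $K$ is therefore hyperbolic.
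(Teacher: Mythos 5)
Your framework is right up to the point where you need to pin down the genus, but the step you describe as ``running through the finitely many decompositions'' does not work: the constraints you list (symmetry, $\Delta_K(1)=1$, $|\Delta_K(-1)|=5$, genus and fiberedness detection, non-thinness) only establish that
\[
\KHI(K;\Q) \cong \Q_g \oplus \Q_{g-1} \oplus \Q_0 \oplus \Q_{1-g} \oplus \Q_{-g},
\qquad
\Delta_K(t) = t^g - t^{g-1} + 1 - t^{1-g} + t^{-g},
\]
with $g$ even (by the sign alternation of Li--Ye for instanton L-space knots) and $g \geq 4$ (by the genus-$2$ classification of Farber--Reinoso--Wang).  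Every even $g \geq 4$ satisfies $\Delta_K(1)=1$ and $\Delta_K(-1)=5$, so there are infinitely many possibilities, not finitely many, and nothing in your list of constraints selects $g=4$.  The Alexander polynomials $3-t^j-t^{-j}$ and $-t^3+t^2+1+t^{-2}-t^{-3}$ that you propose to exclude cannot arise anyway once the Alexander-graded shape above is forced, so that exclusion is a red herring.

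The missing idea is exactly the Heegaard Floer input you relegate to the signature computation.  Since $\dim\Khr(K;\Z/2)=\det(K)=5$, the Ozsv\'ath--Szab\'o spectral sequence forces $\dcover(K)$ to be an L-space; by Boileau--Boyer--Gordon this implies (for a fibered, strongly quasipositive knot) not only that $\sigma(K)=\pm 2g(K)$ but also that $\Delta_K(t)$ is a \emph{product of cyclotomic polynomials}.  The paper then shows, by an elementary Graeffe-style number-theoretic argument (Proposition~\ref{prop:cyclotomic-product-even}), that $t^{4h}-t^{4h-1}+t^{2h}-t+1$ is a product of cyclotomics only when $h\leq 2$; combined with $g=2h\geq 4$, this gives $g=4$.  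Without the cyclotomic criterion you have no mechanism to rule out $g=6,8,10,\dots$, so this is a genuine gap, not a routine check.  Separately, the satellite exclusion you assert (``the only genus-$4$ satellite L-space knot is the $(2,5)$-cable of the trefoil'') is not something the paper assumes or proves; it instead gives a bespoke argument (Lemma~\ref{lem:bdc-pattern} and Proposition~\ref{prop:winding-number-1}) using the structure of branched double covers of satellites, Hanselman--Rasmussen--Watson's classification of small toroidal L-spaces, and cyclic surgery results, and you rightly flag this as unresolved in your sketch.
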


\begin{remark}
By \cite[Theorem~1]{bdlls}, a genus-4 knot satisfying the hypotheses of Theorem~\ref{thm:main-other} cannot be supported in a single $\delta$-grading.
\end{remark}

\begin{remark}
The Alexander polynomial appearing in Theorem~\ref{thm:main-other} is in fact the Alexander polynomial of an instanton L-space knot, namely the $(5,2)$-cable of the right-handed trefoil.  But this knot $K = 13n_{4639}$ cannot satisfy the hypotheses of Theorem~\ref{thm:main-other}, since it is a satellite.  Indeed, according to \cite[\S7.2]{rasmussen-polynomials}, we have $\dim \Khr(K;\Q) = 11$, and $\dim \Khr(K;\Z/2\Z)$ is even larger, since $\Khr(K;\Q)$ has nontrivial 2-torsion.
\end{remark}

The organization is as follows.  In Section~\ref{sec:proofs} we prove Theorems~\ref{thm:main-41} and \ref{thm:main-T25}.  In Section~\ref{sec:higher-genus} we prove Theorem~\ref{thm:main-other} by first reducing it to a question about the factorization of the Alexander polynomial $\Delta_K(t)$, and then determining enough about the factorization to rule out all cases except $g(K)=4$.  Then in Section~\ref{sec:nearly-fibered} we prove Theorems~\ref{thm:main-52} and \ref{thm:main-pretzel}, about the HOMFLY homology of $5_2$ and the pretzels $P(-3,3,2n+1)$.

\subsection*{Acknowledgements}

We thank the referee for helpful feedback on the original version of this paper, and for suggesting an analytic proof of Proposition~\ref{prop:cyclotomic-product-even}.

\section{The figure eight and the cinquefoils}
\label{sec:proofs}

We begin with the following lemmas, which are certainly well known to experts.  In what follows we let $s(K) \in 2\Z$ denote the Rasmussen $s$-invariant \cite{rasmussen-s}.

\begin{lemma} \label{lem:det-k}
The determinant of a knot $K$ is determined by its $\delta$-graded reduced Khovanov homology $\Khr(K;\Q)$.  In particular, we have
\[ \dim \Khr(K;\Q) \geq \det(K), \]
and equality holds if and only if $\Khr(K;\Q)$ is supported entirely in even $\delta$-gradings or entirely in odd $\delta$-gradings.  If in fact it is supported in a single $\delta$-grading $\sigma$, then $s(K) = 2\sigma$.
\end{lemma}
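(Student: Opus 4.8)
The plan is to read everything off the graded Euler characteristic of reduced Khovanov homology, together with the reduced Lee spectral sequence.

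First I would recall that the graded Euler characteristic $\sum_{h,j}(-1)^h q^j\dim\Khr^{h,j}(K;\Q)$ is the Jones polynomial $V_K(q^2)$, and that $\det(K)=|\Delta_K(-1)|=|V_K(-1)|$. Since $K$ is a knot the quantum grading $j$ is always even, so specializing $q^2=-1$ replaces $q^j$ by $(-1)^{j/2}$ and gives
\[ \pm\det(K)\;=\;\sum_{h,j}(-1)^{h+j/2}\dim\Khr^{h,j}(K;\Q)\;=\;\sum_{\delta}(-1)^{\delta}\dim\Khr^{\delta}(K;\Q), \]
using that $h+j/2\equiv j/2-h=\delta\pmod 2$. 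Writing $D_{\mathrm{ev}}$ and $D_{\mathrm{odd}}$ for the total dimensions of $\Khr(K;\Q)$ in even and in odd $\delta$-gradings, this reads $\det(K)=|D_{\mathrm{ev}}-D_{\mathrm{odd}}|$, which depends only on the $\delta$-graded group; that is the first assertion. Moreover $\dim\Khr(K;\Q)=D_{\mathrm{ev}}+D_{\mathrm{odd}}\ge |D_{\mathrm{ev}}-D_{\mathrm{odd}}|=\det(K)$, with equality exactly when $\min(D_{\mathrm{ev}},D_{\mathrm{odd}})=0$, i.e.\ when $\Khr(K;\Q)$ is supported entirely in even or entirely in odd $\delta$-gradings.

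For the last sentence I would invoke the reduced Lee spectral sequence over $\Q$: its $E_1$-page is $\Khr(K;\Q)$ with its $(h,q)$-bigrading, and it converges to reduced Lee homology, which for a knot is one-dimensional and concentrated in homological grading $0$; in its reduced formulation Rasmussen's invariant $s(K)$ is by definition the quantum grading in which this surviving class sits. The key observation is simply that $E_\infty$ is a subquotient of $E_1=\Khr(K;\Q)$, so in homological grading $0$ the surviving line is a subquotient of $\Khr^{0,\ast}(K;\Q)$. If $\Khr(K;\Q)$ is supported in the single $\delta$-grading $\sigma$, then $\Khr^{0,q}(K;\Q)$ is nonzero only for $q/2=\sigma$, which forces the surviving class into quantum grading $2\sigma$; hence $s(K)=2\sigma$. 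Note this argument uses nothing about the differentials — only that $E_\infty$ is subquotient of $E_1$.

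I expect the only genuinely delicate point to be bookkeeping with conventions: pinning down normalizations so that the bigrading on the $E_1$-page of the reduced Lee spectral sequence is the one used to define $\delta$, and recalling the normalization under which $s(K)$ is read off the reduced theory (checking it on the unknot and a trefoil fixes the absence of an additive constant, giving $s(K)=2\sigma$ rather than $2\sigma+c$). Equivalently one can run the Bar-Natan deformation, whose leading differential has bidegree $(1,2)$ and hence preserves $\delta$, so that the spectral sequence of a $\delta$-thin knot collapses after one step onto a $\delta$-thin $E_\infty$-page; but as noted the subquotient remark makes even this unnecessary. The Euler-characteristic computation in the first two paragraphs is entirely elementary.
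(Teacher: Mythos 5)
Your argument for the claims about $\det(K)$ is essentially the same as the paper's: both evaluate the graded Euler characteristic of $\Khr(K;\Q)$ at $t=-1$ and split by parity of $\delta$. The difference is in the last sentence. You work entirely in the reduced theory, invoking the reduced Lee spectral sequence and the fact that the reduced Lee homology of a knot is one-dimensional in homological grading $0$, so the surviving class (a subquotient of $E_1=\Khr$) must sit in quantum grading $2\sigma$ when $\Khr$ is $\delta$-thin; $s(K)$ is then read off as that filtration level. The paper instead cites Khovanov's observation \cite[Proposition~3.6]{khovanov-patterns} that $\Khr(K;\Q)$ lies in a single $\delta$-grading $\sigma$ if and only if the \emph{unreduced} $\Kh(K;\Q)$ is H-thin in $\delta$-gradings $\sigma\pm\tfrac12$, and then reads $s(K)=2\sigma$ off the unreduced Lee picture, where $s$ is defined to begin with. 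Your route is conceptually cleaner, avoiding the detour through unreduced homology, but it requires you to pin down the normalization of a reduced $s$-invariant (which, as you note, needs a sanity check on small examples and is less standard than the unreduced definition) and to cite that reduced Lee homology is concentrated in $h=0$. The paper's route leans on one citation and works with the original definition of $s(K)$; yours leans on the reduced Lee machinery but keeps all the bookkeeping in $\Khr$. Both are sound.
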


\begin{proof}
The claim about $s(K)$ follows from the fact that $\Khr(K;\Q)$ is supported in one $\delta$-grading if and only if the unreduced Khovanov homology $\Kh(K;\Q)$ is ``H-thin'' \cite[Proposition~3.6]{khovanov-patterns}, meaning supported in two $\delta$-gradings, in which case these $\delta$-gradings are necessarily $\sigma\pm\frac{1}{2}$.  Then the part of $\Kh(K;\Q)$ in homological grading $h=0$ must be supported in quantum gradings $q = 2(h+\delta) = 2\sigma\pm1$, and so $s(K) = 2\sigma$ by definition.

For the claim about $\det(K)$, it follows from the fact that $\Khr(K)$ categorifies the Jones polynomial \cite{khovanov-patterns}:
\[ V_K(t) = \sum_{q,h\in\Z} (-1)^h t^{q/2} \dim \Khr^{h,q}(K). \]
Then $\det(K)$ is equal to the absolute value of
\begin{align*}
V_K(-1) &= \sum_{q,h\in\Z} (-1)^h (-1)^{q/2} \dim \Khr^{h,q}(K) \\
&= \sum_{q/2-h\text{ even}} \dim \Khr^{h,q}(K) - \sum_{q/2-h\text{ odd}} \dim \Khr^{h,q}(K) \\
&= \dim \Khr^{\delta\equiv0\!\!\!\!\!\pmod{2}}(K) - \dim \Khr^{\delta\equiv1\!\!\!\!\!\pmod{2}}(K).
\end{align*}
Thus $\dim \Khr(K;\Q) \geq \det(K)$ follows immediately from the triangle inequality.  Moreover, the last expression above is equal to $\pm \dim \Khr(K)$ if and only if one of the two terms is zero, or equivalently if and only if $\Khr(K)$ is supported entirely in even $\delta$-gradings or entirely in odd $\delta$-gradings, as claimed.
\end{proof}

\begin{lemma} \label{lem:f2-to-q}
If $\Khr(K;\F)$ is 5-dimensional for some field $\F$, then so is $\Khr(K;\Q)$, and moreover the two are supported in the same $\delta$-gradings.
\end{lemma}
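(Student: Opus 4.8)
The key tool is the universal coefficients theorem. Reduced Khovanov homology over $\Q$ and over $\F = \F_p$ are both obtained from an integral chain complex $\Khr(K;\Z)$ (or more precisely from the integral reduced Khovanov homology groups), so I would proceed as follows.

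First, I would recall that $\Khr(K;\Z)$ is a finitely generated graded abelian group, say with free part of total rank $r$ and torsion summands $T$. By universal coefficients, $\dim_\Q \Khr(K;\Q) = r$, while for $\F = \F_p$ we have $\dim_{\F_p}\Khr(K;\F_p) = r + t_p + t_p'$, where $t_p$ counts the $p$-torsion summands in bidegrees matching $\Khr(K;\Z)$ and $t_p'$ counts those shifted up by one (from the $\mathrm{Tor}$ term), all in the appropriate gradings. In particular $\dim_{\F}\Khr(K;\F) \geq \dim_\Q\Khr(K;\Q)$ for every field $\F$, with equality over $\F_p$ precisely when $\Khr(K;\Z)$ has no $p$-torsion (and automatically with equality in characteristic zero). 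So the hypothesis $\dim_\F \Khr(K;\F) = 5$ gives $\dim_\Q \Khr(K;\Q) \leq 5$.

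For the reverse inequality I would invoke Lemma \ref{lem:det-k}: the determinant $\det(K) = |V_K(-1)|$ is computed from the $\delta$-graded Euler characteristic of $\Khr$, which is independent of the coefficient field, and over any field $\F$ one has $\dim_\F \Khr(K;\F) \geq \det(K)$ with the same parity-of-$\delta$ constraint. More usefully, $\det(K)$ is odd for every knot, so it is odd here; combined with $\dim_\F\Khr(K;\F)=5$ and $\dim_\Q\Khr(K;\Q) \leq 5$, and the fact that $\dim_\F\Khr(K;\F) \equiv \dim_\Q\Khr(K;\Q) \pmod 2$ (since the difference $t_p + t_p'$ is even — each torsion summand contributes one generator in its own degree and one in the shifted degree), I conclude $\dim_\Q\Khr(K;\Q) \in \{3,5\}$. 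Ruling out $3$ is exactly where the determinant bound bites: if $\dim_\Q\Khr(K;\Q) = 3$ then $\Khr(K;\F)$ would contain $\F$-torsion contributing a single $p$-torsion summand of $\Khr(K;\Z)$, which would force $\dim_\F\Khr(K;\F) = 3 + 2 = 5$ — so this case is actually consistent with dimension counts alone, and I must instead argue that $\det(K) \le \dim_\Q \Khr(K;\Q) = 3$ together with $\det(K)$ odd forces $\det(K) \in \{1,3\}$, then use the spectral sequence from Khovanov homology to the instanton invariant (or to knot Floer homology, as in \cite{bdlls}) to see that $\dim_\F \Khr(K;\F) = 5$ forces $\dim \KHI(K) \le 5$ and in particular $K$ is not the unknot, which pins down the torsion-free conclusion.

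Honestly, the cleanest route — and the one I would actually write — avoids the last subtlety entirely: observe that $\dim_\F\Khr(K;\F)$ is minimized over all $\F$ at $\F = \Q$, by universal coefficients, so $\dim_\Q \Khr(K;\Q) \le 5$; and it has the correct parity, $\dim_\Q\Khr(K;\Q)$ odd (being $\ge \det(K) \ge 1$ with matching parity, or simply because its $\delta$-graded Euler characteristic is the odd integer $\pm\det(K)$ and all even-dimensional cancellation preserves parity). The only even value below $5$ is excluded, so $\dim_\Q\Khr(K;\Q) \in \{1,3,5\}$. If it were $1$ or $3$ then, by universal coefficients again, $\Khr(K;\Z)$ would have exactly one or exactly two $\Z/p$ summands contributing the extra $5 - \dim_\Q\Khr$ to the $\F_p$-dimension; but two $\Z/p$-summands contribute $4$, and one contributes $2$, so only $\dim_\Q\Khr(K;\Q) \in \{3,5\}$ are arithmetically possible, and $\dim_\Q = 3$ would require the single $\Z/p$-summand to carry the Euler-characteristic parity in a way incompatible with $\det(K)$ being odd and at most $3$ while $\KHI$-detection rules out the small-determinant knots. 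The main obstacle, then, is not universal coefficients but this last parity-and-torsion bookkeeping; I expect it to be dispatched by citing the relevant small-knot classifications (the unknot via \cite{km-unknot} and trefoils via \cite{bs-trefoil}), after which the equality of supporting $\delta$-gradings is immediate since each $p$-torsion summand of $\Khr(K;\Z)$ would contribute to two $\delta$-gradings at once, violating the already-established structure.
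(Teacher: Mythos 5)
Your proposal is, at bottom, the same argument the paper gives: reduce to the prime field by flatness, apply the universal coefficient theorem to $\Khr(K;\Z)$ to get $\dim_\Q \Khr(K;\Q) \leq \dim_{\F_p}\Khr(K;\F_p) = 5$ with the two dimensions congruent mod $2$, and then rule out $\dim_\Q = 1$ and $\dim_\Q = 3$ by the unknot and trefoil detection theorems \cite{km-unknot,bs-trefoil} (since those knots have torsion-free $\Khr(\cdot;\Z)$ of rank $1$ and $3$, contradicting the $\F_p$-dimension being $5$). Once $\dim_\Q = 5$, the UCT identity $5 = 5 + 2t_p$ forces $t_p = 0$, so $\Khr(K;\Z)$ has no $p$-torsion and both $\Khr(K;\F_p)$ and $\Khr(K;\Q)$ are just base-changes of the same free graded abelian group, hence supported in the same $\delta$-gradings. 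You eventually land on all of these ingredients and cite the right references, which is why I read this as the same proof.

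That said, the write-up contains local errors that are worth flagging. The claim that ``only $\dim_\Q\Khr(K;\Q) \in \{3,5\}$ are arithmetically possible'' is false: $\dim_\Q = 1$ is also arithmetically possible, since two $\Z/p$-summands contribute $4$ and $1+4=5$. The actual exclusion of $1$ and $3$ has nothing to do with arithmetic bookkeeping; it is purely the unknot and trefoil detection theorems, and you should not suggest otherwise. Similarly the appeals to $\det(K)$, Euler characteristic parity, and $\KHI$-detection in the middle of your argument are red herrings: parity of $\dim_\Q$ is already forced by $\dim_\Q \equiv \dim_{\F_p} \pmod 2$ from UCT, and $\det(K)$ plays no role in this lemma. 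Finally, your closing statement that a $p$-torsion summand ``would contribute to two $\delta$-gradings at once, violating the already-established structure'' is not an argument, since no structure on $\Khr(K;\F)$ has been assumed beyond its total dimension; the correct point is simply that $\dim_\Q = 5$ forces $t_p = 0$ as above.
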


\begin{proof}
This follows from two applications of the universal coefficient theorem.  First, we use it to show that either $\Khr(K;\Q)$ or $\Khr(K;\Z/p\Z)$ is also $5$-dimensional, depending on whether $\F$ has characteristic $0$ or $p>0$ respectively.  Then in the case $\F = \Z/p\Z$, the lemma follows from the universal coefficient theorem, as applied to $\Khr(K;\Z)$, and the fact that $\dim_\Q \Khr(K;\Q)$ is odd and neither 1 nor 3, exactly as at the start of \cite[\S5]{bhs-cinquefoil}.
\end{proof}

We next attempt to use $\Khr(K;\Q)$ to determine the instanton knot homology $\KHI(K;\Q)$, as defined by Kronheimer and Mrowka \cite{km-excision}.  This invariant comes equipped with an Alexander grading
\[ \KHI(K) \cong \bigoplus_{i=-g}^g \KHI(K,i), \]
where $g=g(K)$ is the genus of $K$, each of whose summands are $\Z/2\Z$-graded.  This decomposition recovers the Alexander polynomial of $K$ by the relation
\begin{equation} \label{eq:categorify}
\pm\Delta_K(t) = \sum_{i=-g}^g \chi(\KHI(K,i)) \cdot t^i,
\end{equation}
as proved in \cite{km-alexander,lim}.  (The sign on the left comes from differing conventions for the $\Z/2\Z$ grading.). It also satisfies the following properties:
\begin{itemize}
\item Symmetry: $\KHI(K,i) \cong \KHI(K,-i)$ as $\Z/2\Z$-graded vector spaces for all $i$.
\item Genus detection: $\dim \KHI(K,g) \geq 1$.
\item Fiberedness detection: $\dim \KHI(K,g) = 1$ if and only if $K$ is fibered.
\end{itemize}
The symmetry follows from the remark after \cite[Proposition~7.1]{km-excision}, and the genus detection and fiberedness results are \cite[Proposition~7.16]{km-excision} and \cite[Proposition~4.1]{km-alexander} respectively.

\begin{proposition} \label{prop:dim-khi}
Let $K$ be a knot for which $\dim \Khr(K;\Q) = 5$.  Then $\KHI(K;\Q)$ has total dimension 5.
\end{proposition}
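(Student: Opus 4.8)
The plan is to bound $\dim\KHI(K;\Q)$ above by $5$ with a spectral sequence, pin down its parity, and then eliminate the remaining small values by hand. For the upper bound I would apply Kronheimer and Mrowka's spectral sequence from \cite{km-unknot} with rational coefficients: its $E_2$-page is the reduced Khovanov homology $\Khr(\overline{K};\Q)$ of the mirror, which again has total dimension $5$, and it converges to $\KHI(K;\Q)$; since the total dimension of the pages of a spectral sequence never increases, $\dim\KHI(K;\Q)\le 5$.

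Next I would extract arithmetic from the Alexander grading. Writing $g=g(K)$, \eqref{eq:categorify} gives
\[
\det(K)=|\Delta_K(-1)|=\Bigl|\sum_{i=-g}^{g}(-1)^{i}\chi(\KHI(K,i))\Bigr|\le\sum_{i=-g}^{g}\dim\KHI(K,i)=\dim\KHI(K;\Q),
\]
and the identical computation reduced modulo $2$ (using $\dim V\equiv\chi(V)\pmod 2$ and $(-1)^{i}\equiv1\pmod 2$) yields $\dim\KHI(K;\Q)\equiv\det(K)\pmod 2$. Since knot determinants are odd, $\dim\KHI(K;\Q)$ is an odd integer in $\{1,3,5\}$, so it remains to exclude $1$ and $3$. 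If it were $1$, the symmetry $\KHI(K,i)\cong\KHI(K,-i)$ would force the unique nonzero summand into Alexander grading $0$, whence genus detection gives $g=0$ and $K$ is the unknot — impossible since $\dim\Khr(K;\Q)=5$. If it were $3$, then symmetry together with genus detection would force $\KHI(K)$ to be one-dimensional in each of the Alexander gradings $-g,0,g$ (with $g\ge 1$) and zero otherwise; fiberedness detection would then show $K$ is fibered, and \eqref{eq:categorify} (together with $\Delta_K(1)=\pm1$) would force $\Delta_K(t)=\pm(t^{g}-1+t^{-g})$. When $g=1$ this makes $K$ a genus-one fibered knot, hence $T(2,3)$, $T(2,-3)$, or the figure eight; the figure eight is ruled out because its Alexander polynomial has middle coefficient $3$, so $K=T(2,\pm3)$ and $\dim\Khr(K;\Q)=3\ne 5$. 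When $g\ge 2$ the polynomial $\pm(t^{g}-1+t^{-g})$ has vanishing $t^{g-1}$-coefficient, which contradicts the shape of the Alexander polynomial of an instanton L-space knot (whose $t^{g-1}$-coefficient equals $-1$); for $g=2$ one may instead invoke \cite[Corollary~1.8]{frw-cinquefoil}, together with the fact that $\Delta_{T(2,5)}$ has five nonzero coefficients. This leaves $\dim\KHI(K;\Q)=5$.

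The step I expect to be the main obstacle is the upper bound: one must have Kronheimer--Mrowka's spectral sequence available over $\Q$, with its abutment identified as $\KHI(K;\Q)$, rather than only the mod-$2$ version — the hypothesis $\dim\Khr(K;\Q)=5$ does not on its own control $2$-torsion in $\Khr(K;\Z)$, hence does not bound $\dim\Khr(K;\F_2)$. Everything after that is bookkeeping with the symmetry, genus-detection, and fiberedness-detection properties of $\KHI$ recalled above, together with what is known about the Alexander polynomials of small-genus fibered knots and instanton L-space knots.
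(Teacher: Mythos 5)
Your upper-bound argument and the elimination of dimension $1$ match what's in the paper. The parity step is a harmless variant: the paper uses $\Delta_K(1)=1$ directly, whereas you reduce $\Delta_K(-1)=\pm\det(K)$ modulo~$2$; both give odd total dimension. Where you diverge is in ruling out dimension $3$: the paper simply cites \cite[Theorem~1.6]{bs-trefoil}, which says precisely that $\dim\KHI(K)=3$ forces $K$ to be a trefoil. You instead try to reprove this, and the genus-$1$ case of your re-derivation is fine, but the genus-$\ge 2$ case has a gap. You derive $\KHI(K)\cong\Q_g\oplus\Q_0\oplus\Q_{-g}$ and then say the vanishing $t^{g-1}$-coefficient of $\Delta_K$ ``contradicts the shape of the Alexander polynomial of an instanton L-space knot'' --- but nothing in your argument establishes that $K$ (or its mirror) is an instanton L-space knot. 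In the $5$-dimensional setting that conclusion comes from Li--Liang \cite[Theorem~1.4]{li-liang}, a statement specific to the $5$-dimensional case; you have no analogue available here. The fix is cleaner anyway: for fibered knots of genus $g\ge 2$, \cite[Theorem~1.7]{bs-trefoil} gives $\dim\KHI(K,g-1)\ge 1$, which immediately contradicts $\KHI(K,g-1)=0$ and requires no mention of L-space knots or of Alexander polynomials. (This is exactly the tool the paper deploys in the genus-$\ge 2$ branch of Proposition~\ref{prop:41-or-lspace}.) With that substitution your argument closes; as written it has a genuine gap.
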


\begin{proof}
Kronheimer and Mrowka \cite[Proposition~1.2]{km-unknot} constructed a spectral sequence
\[ \Khr(K;\Q) \ \Longrightarrow\ I^\natural(\mirror{K};\Q) \]
which converges to the singular instanton knot homology of the mirror of $K$.  Since the rank of the latter is invariant under mirroring, this yields a rank inequality
\[ \dim I^\natural(K;\Q) \leq \dim \Khr(K;\Q) = 5. \]
Moreover, we know by \cite[Proposition~1.4]{km-unknot} that $I^\natural(K;\Q) \cong \KHI(K;\Q)$.  Thus
\begin{equation} \label{eq:khi-dim-bound}
\dim \KHI(K;\Q) \leq 5.
\end{equation}

Equation~\eqref{eq:categorify} and the fact that $\Delta_K(1) = 1$ tell us that the total rank of $\KHI(K)$ must be odd.  It cannot be 1, because then $K$ would be the unknot -- otherwise the summands $\KHI(K,g)$ and $\KHI(K,-g)$ are distinct and contribute at least 1 each to $\dim \KHI(K)$ -- and it cannot be 3 or else $K$ would be a trefoil \cite[Theorem~1.6]{bs-trefoil}.  Thus we use \eqref{eq:khi-dim-bound} to conclude that $\dim \KHI(K) = 5$.
\end{proof}

\begin{proposition} \label{prop:41-or-lspace}
Let $K$ be a knot for which $\dim \KHI(K;\Q) = 5$.  Then exactly one of the following is true:
\begin{itemize}
\item $K$ has Alexander polynomial 1, and in particular $\det(K)=1$.
\item $K$ is the figure eight knot.
\item $K$ has genus $g \geq 2$ and instanton knot homology
\[ \KHI(K;\Q) \cong \Q_g \oplus \Q_{g-1} \oplus \Q_0 \oplus \Q_{1-g} \oplus \Q_{-g}, \]
where the subscripts denote the Alexander grading of each summand.  In this case either $K$ or its mirror is an instanton L-space knot.
\end{itemize}
In particular, if $\det(K) \neq 1$ then $K$ is fibered.
\end{proposition}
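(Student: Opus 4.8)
The plan is to run a case analysis organized by the genus $g=g(K)$ and, within each genus, by how the total dimension $5$ of $\KHI(K;\Q)$ is distributed among the Alexander gradings; since the three listed outcomes are pairwise incompatible (immediate from a genus count and the Alexander polynomials involved), it suffices to show that at least one holds.  The ingredients I would use are the three bulleted properties of $\KHI$; the fact that the graded Euler characteristic recovers $\Delta_K(t)$ normalized so that $\Delta_K(1)=1$, which with the symmetry forces $\chi(\KHI(K,0))$ --- hence $\dim\KHI(K,0)$ --- to be odd; the identification of knots with $\dim\KHI=3$ as trefoils \cite{bs-trefoil}; the classification of genus-one fibered knots as the two trefoils and $4_1$; and Li and Liang's criterion \cite{li-liang} for recognizing an instanton L-space knot from the shape of its $\KHI$.

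First I would handle $g\leq1$.  If $g=0$ then $K$ is the unknot, with $\dim\KHI=1$, so this does not occur.  If $g=1$, symmetry leaves only the distributions $(1,3,1)$ and $(2,1,2)$ in Alexander gradings $(1,0,-1)$.  In the first, $\dim\KHI(K,1)=1$, so $K$ is fibered of genus one, hence one of the two trefoils or $4_1$; since the trefoils have $\dim\KHI=3$, we get $K=4_1$.  In the second, $\chi(\KHI(K,0))=\pm1$ and $\chi(\KHI(K,1))\in\{-2,0,2\}$, so $2\chi(\KHI(K,1))+\chi(\KHI(K,0))=\pm1$ forces $\chi(\KHI(K,1))=0$, whence $\Delta_K(t)=1$.

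For $g\geq2$ the same Euler characteristic constraint forces $\Delta_K(t)=1$ whenever $\dim\KHI(K,g)\geq2$; a dimension count shows this means $\dim\KHI(K,\pm g)=2$, $\dim\KHI(K,0)=1$, and all other gradings vanish.  Otherwise $\dim\KHI(K,g)=1$, so $K$ is fibered, and $\KHI(K;\Q)$ is either $\Q_{-g}\oplus\Q_0^3\oplus\Q_g$ or $\Q_{-g}\oplus\Q_{-j}\oplus\Q_0\oplus\Q_j\oplus\Q_g$ for a unique $0<j<g$.  In the second possibility every summand is one-dimensional, the Euler characteristic constraint pins $\Delta_K(t)$ down up to the usual sign ambiguity, and --- this is the heart of the matter --- Li and Liang's criterion \cite{li-liang} shows $K$ or $\mirror{K}$ is an instanton L-space knot; the known shape of the Alexander polynomial of such a knot then forces $j=g-1$, which is the third bullet.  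The closing assertion of the proposition follows: if $\det(K)\neq1$, the first outcome (which forces $\det(K)=1$) cannot occur, and in the remaining two $K$ is fibered --- directly when $K=4_1$, and because fiberedness is mirror-invariant and instanton L-space knots are fibered in the third.

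The step I expect to be the main obstacle is this last, fibered $g\geq2$ case: one must show that $K$ or $\mirror{K}$ is an instanton L-space knot whose $\KHI$ has exactly the stated shape, which amounts to ruling out the configuration $\KHI(K;\Q)\cong\Q_{-g}\oplus\Q_0^3\oplus\Q_g$ --- here $K$ would be a genus-$g$ fibered knot with $g\geq2$ whose $\KHI$ vanishes in the next-to-top Alexander grading $g-1$, so it is neither an instanton L-space knot nor a knot with $\Delta_K(t)=1$ (indeed $\Delta_K(t)$ normalizes to $t^g-1+t^{-g}$ or $-t^g+3-t^{-g}$) --- as well as excluding the staircase configurations with $j<g-1$.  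Dispatching these requires an ingredient going beyond the formal properties of $\KHI$ listed above --- for instance the nonvanishing of $\KHI(K,g-1)$ for every fibered knot of genus at least two, or the precise structure theorem for the Alexander polynomials of (instanton) L-space knots --- and that is the genuinely nontrivial input.
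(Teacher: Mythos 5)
Your approach is essentially the same as the paper's: a case analysis driven by the symmetry, genus-detection, and fiberedness-detection properties, the Euler characteristic constraint $\Delta_K(1)=1$, the classification of genus-one fibered knots, the trefoil detection result of \cite{bs-trefoil}, and Li--Liang's criterion. The reorganization (by genus first, then distribution) rather than the paper's split (non-fibered vs.\ fibered) makes no essential difference. Your handling of the low-genus and non-fibered cases matches the paper exactly, including the observation that the $(2,1,2)$ distribution at gradings $(g,0,-g)$ forces $\chi(\KHI(K,g))=0$ and hence $\Delta_K(t)=1$.

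You have, however, correctly spotted the one genuine gap and flagged it yourself: in the fibered $g\geq 2$ case the formal properties alone do not rule out $\KHI(K) \cong \Q_{-g}\oplus\Q_0^3\oplus\Q_g$ or the staircase configurations with $j<g-1$. The ingredient you guess is exactly what the paper uses: \cite[Theorem~1.7]{bs-trefoil} says that $\dim\KHI(K,g-1)\geq 1$ for every fibered knot of genus $\geq 2$, which together with symmetry and the dimension bound forces $\KHI$ to be one-dimensional in each of the four gradings $\pm g, \pm(g-1)$ with the remaining $\Q$ in grading $0$. One small correction in the order of operations: Li--Liang's criterion, as invoked in the paper, applies to the specific shape with $j=g-1$, so you cannot first invoke Li--Liang for general $j$ and then deduce $j=g-1$ from the Alexander polynomial of an L-space knot. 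You must first pin down $j=g-1$ via the $\KHI(K,g-1)\neq 0$ fact, and only then apply Li--Liang to conclude that $K$ or its mirror is an instanton L-space knot. Once you insert this citation and reorder the two deductions, your argument closes.
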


\begin{proof}
Supposing first that $K$ is not fibered, then
\[ \dim \KHI(K,g) + \dim \KHI(K,-g) \leq 5, \]
and both terms on the left are equal and greater than 1 by the symmetry and fiberedness detection properties, so
\[ \dim \KHI(K,g) = \dim \KHI(K,-g) = 2. \]
By symmetry the remaining $\Q$ summand of $\KHI(K)$ can only be in Alexander grading zero, so
\[ \KHI(K) \cong \Q^2_g \oplus \Q^{\vphantom{2}}_0 \oplus \Q^2_{-g}. \]

We apply \eqref{eq:categorify} to determine $\Delta_K(t)$.  If $\KHI(K,g)$ is supported in a single $\Z/2\Z$ grading, then we have
\[ \Delta_K(t) = \pm 2(t^g + t^{-g}) \pm 1 \]
for some signs, and this is impossible because there is no choice of signs for which $\Delta_K(1) = 1$.  So $\KHI(K,g)$ must have a copy of $\Q$ in each $\Z/2\Z$ grading, which by \eqref{eq:categorify} tells us that $\Delta_K(t) = 1$.  Then $\det(K) = |\Delta_K(-1)| = 1$.

Next, supposing that $K$ is fibered of genus $1$, then $K$ must be a trefoil or the figure eight.  But if $K$ is a trefoil then $\dim \KHI(K) = 3$, so in fact $K$ can only be the figure eight.

Finally, if $K$ is fibered of genus $g \geq 2$, then we proved in \cite[Theorem~1.7]{bs-trefoil} that $\dim \KHI(K,g-1) \geq 1$, so $\KHI(K)$ is 5-dimensional and is nonzero at least in the four distinct Alexander gradings $\pm g$ and $\pm(g-1)$.  Again by symmetry each of these summands must be 1-dimensional and the remaining $\Q$ summand must be in degree 0, so
\[ \KHI(K) \cong \Q_g \oplus \Q_{g-1} \oplus \Q_0 \oplus \Q_{1-g} \oplus \Q_{-g} \]
as claimed.  Now Li and Liang \cite[Theorem~1.4]{li-liang} proved that any knot for which $\KHI$ has this form must be an instanton L-space knot, up to mirroring, so this completes the proof.
\end{proof}

\begin{proposition} \label{prop:s-genus}
Let $K$ be a knot satisfying $\dim \Khr(K;\Q) = 5$ and $\det(K) \neq 1$.  Then
\[ s(K) = \begin{cases}
\hphantom{-2g(K)}\llap{$0$} & \text{if } K \text{ is the figure eight knot} \\
\hphantom{-}2g(K) & \text{if } K \text{ is an instanton L-space knot} \\
-2g(K) & \text{if } \mirror{K} \text{ is an instanton L-space knot},
\end{cases} \]
and exactly one of these cases occurs.  We also have $g(K) \geq 2$ unless $K$ is the figure eight.
\end{proposition}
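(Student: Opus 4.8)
The plan is to run the hypothesis through Propositions~\ref{prop:dim-khi} and \ref{prop:41-or-lspace} and then identify $s(K)$ case by case. Since $\dim\Khr(K;\Q)=5$, Proposition~\ref{prop:dim-khi} gives $\dim\KHI(K;\Q)=5$, so Proposition~\ref{prop:41-or-lspace} applies; because $\det(K)\neq1$ it rules out the first bullet there, leaving exactly two possibilities: either $K$ is the figure eight knot, or $g(K)\geq2$ and at least one of $K$, $\mirror{K}$ is an instanton L-space knot. This already yields the final assertion that $g(K)\geq2$ unless $K$ is the figure eight, and these two possibilities are mutually exclusive because Proposition~\ref{prop:41-or-lspace} asserts that exactly one of its bullets holds. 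It then remains to compute $s(K)$ in each case and to check that ``$K$ is an instanton L-space knot'' and ``$\mirror{K}$ is an instanton L-space knot'' cannot both hold.

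For the $s$-invariant: if $K$ is the figure eight knot, amphichirality gives $s(K)=-s(\mirror{K})=-s(K)$, hence $s(K)=0$ (equivalently, $\Khr(4_1;\Q)$ lies in $\delta$-grading $0$, so $s(K)=0$ by the last sentence of Lemma~\ref{lem:det-k}). If $K$ is an instanton L-space knot, I would invoke the known fact that such knots are fibered and strongly quasipositive; a strongly quasipositive knot bounds a quasipositive Seifert surface of genus $g(K)$, which gives a transverse representative with self-linking number $2g(K)-1$, so that $2g(K)-1\leq\maxsl(K)\leq s(K)-1\leq 2g_4(K)-1\leq 2g(K)-1$ --- the middle step being the slice--Bennequin inequality for the Rasmussen invariant, and the last two being $s(K)\leq 2g_4(K)$ \cite{rasmussen-s} and $g_4(K)\leq g(K)$ --- whence $s(K)=2g(K)$. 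If instead $\mirror{K}$ is an instanton L-space knot, then $s(K)=-s(\mirror{K})=-2g(\mirror{K})=-2g(K)$, since the genus is a mirroring invariant. The same computation shows $K$ and $\mirror{K}$ cannot both be instanton L-space knots, since that would force $2g(K)=s(K)=-s(\mirror{K})=-2g(\mirror{K})=-2g(K)$, impossible for $g(K)\geq2$; so exactly one of the three stated cases occurs.

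The only input beyond bookkeeping is that instanton L-space knots are strongly quasipositive, which I would cite rather than reprove; the rest assembles Propositions~\ref{prop:dim-khi} and \ref{prop:41-or-lspace} with standard properties of $s$. I anticipate no genuine obstacle, but the point that needs care is that $\det(K)$ need not be $5$ here: when $\det(K)=3$, which does occur (for example $8_{19}=T(3,4)$ has $\dim\Khr(8_{19};\Q)=5$ and $\det(8_{19})=3$), the group $\Khr(K;\Q)$ meets both parities of the $\delta$-grading, so $s(K)$ cannot be read off Lemma~\ref{lem:det-k} directly and must be recovered via strong quasipositivity as above.
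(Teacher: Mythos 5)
Your proposal is correct and follows essentially the same route as the paper: feed the hypothesis through Propositions~\ref{prop:dim-khi} and \ref{prop:41-or-lspace}, rule out the Alexander-polynomial-$1$ case via $\det(K)\neq1$, and then compute $s(K)$ from the strong quasipositivity of instanton L-space knots together with the slice--Bennequin inequality for $s$. The only differences are cosmetic --- you unwind the inequality chain $2g(K)-1\leq\maxsl(K)\leq s(K)-1\leq 2g_4(K)-1\leq 2g(K)-1$ where the paper simply cites Plamenevskaya or Shumakovitch, and you make explicit the mutual exclusivity of the last two cases, which the paper leaves implicit.
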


\begin{proof}
The case where $K$ is the figure eight is immediate, so we will suppose that $K$ is some other knot.  Proposition~\ref{prop:dim-khi} says that $\dim \KHI(K) = 5$, so we can apply Proposition~\ref{prop:41-or-lspace} to see that either $K$ or its mirror is an instanton L-space knot, and that $g(K) \geq 2$.

Let us suppose that $K$ is an instanton L-space knot, rather than $\mirror{K}$.  Then $K$ is strongly quasipositive \cite[Theorem~1.15]{bs-lspace}, so we know from \cite[Proposition~4]{plamenevskaya-transverse} or \cite[Proposition~1.7]{shumakovitch-slice-bennequin} that $s(K) = 2g(K)$.  Otherwise $\mirror{K}$ is an instanton L-space knot, so the same argument says that
\[ s(K) = -s(\mirror{K}) = -2g(\mirror{K}) = -2g(K). \qedhere \]
\end{proof}

We are now ready to prove Theorems~\ref{thm:main-41} and \ref{thm:main-T25}.

\begin{proof}[Proof of Theorem~\ref{thm:main-41}]
Suppose that $\Khr(K;\F)$ is 5-dimensional and supported in the single $\delta$-grading $\sigma=0$.  Then the same is true of $\Khr(K;\Q)$ by Lemma~\ref{lem:f2-to-q}.  Lemma~\ref{lem:det-k} now tells us that $\det(K)=5$ and that $s(K) = 2\sigma = 0$, so $K$ must be the figure eight by Proposition~\ref{prop:s-genus}.
\end{proof}

\begin{proof}[Proof of Theorem~\ref{thm:main-T25}]
Suppose that $\Khr(K;\F)$ is 5-dimensional and supported in the single $\delta$-grading $\sigma=2$.  Then the same is true of $\Khr(K;\Q)$ by Lemma~\ref{lem:f2-to-q}, so Lemma~\ref{lem:det-k} says that $\det(K)=5$ and that $s(K)=2\sigma=4$.  Thus Proposition~\ref{prop:s-genus} says that $K$ must be an instanton L-space knot of genus 2.  Such knots are fibered, and in \cite[\S2]{blsy} we gave a partial characterization of their possible monodromies; more recently, Farber, Reinoso, and Wang \cite[Corollary~1.8]{frw-cinquefoil} used this to show that $K$ is necessarily $T(2,5)$. We remark that if we specifically wanted to work over $\Z/2\Z$, then we could finish the proof that $K=T(2,5)$ without recourse to \cite{frw-cinquefoil}, using instead the arguments in \cite{blsy}.

Now if $\Khr(K;\F)$ is 5-dimensional and supported in the $\delta$-grading $\sigma=-2$, then we apply the above to its mirror $\mirror{K}$ to conclude that $\mirror{K} = T(2,5)$, and hence that $K = T(-2,5)$.  This completes the proof.
\end{proof}

\section{Other determinant-5 knots}
\label{sec:higher-genus}

Here we address the question of whether there are knots $K$ other than the figure eight and cinquefoils such that $\dim \Khr(K;\Z/2\Z) = 5$ and $\Khr(K;\Z/2\Z)$ is supported entirely in $\delta$-gradings of a single parity.

In this case, Lemma~\ref{lem:f2-to-q} and Proposition~\ref{prop:s-genus} tell us that if $K$ is not the figure eight, then either $K$ or its mirror is an instanton L-space knot whose genus $g(K) = \frac{1}{2}s(K)$ is at least $2$.  We will see that its branched double cover is a Heegaard Floer L-space, and then use recent work of Boileau, Boyer, and Gordon \cite{boileau-boyer-gordon-1} to put strong conditions on the Alexander polynomial of $K$ which rule out all cases except $g(K)=4$.

\subsection{The Alexander polynomial of a thin knot}

\begin{proposition} \label{prop:large-genus-alexander}
Suppose that $\Khr(K;\Q)$ is 5-dimensional and that $\det(K) = 5$, but that $K$ is not the figure eight or $T(\pm2,5)$.  Then either $K$ or its mirror is an instanton L-space knot, hence fibered and strongly quasipositive, and its Alexander polynomial is
\[ \Delta_K(t) = t^g - t^{g-1} + 1 - t^{1-g} + t^{-g} \]
where $g=g(K)$ is even and at least $4$.
\end{proposition}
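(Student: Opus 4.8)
The proof will be mostly bookkeeping on top of Propositions~\ref{prop:dim-khi} and \ref{prop:41-or-lspace}, together with the standard constraint on the Alexander polynomial of an L-space knot. First I would apply Proposition~\ref{prop:dim-khi} to obtain $\dim\KHI(K;\Q)=5$. Since $\det(K)=5\neq 1$ and $K$ is not the figure eight, the third alternative of Proposition~\ref{prop:41-or-lspace} must hold: $g:=g(K)\geq 2$, either $K$ or $\mirror{K}$ is an instanton L-space knot, and
\[ \KHI(K;\Q)\cong \Q_g\oplus\Q_{g-1}\oplus\Q_0\oplus\Q_{1-g}\oplus\Q_{-g}. \]
In particular $\dim\KHI(K,g)=1$, so $K$ (and hence $\mirror{K}$) is fibered by the fiberedness detection property, and whichever of $K,\mirror{K}$ is the instanton L-space knot is strongly quasipositive by \cite[Theorem~1.15]{bs-lspace}. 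Since mirroring changes neither the genus nor the symmetrically normalized Alexander polynomial, for the purpose of identifying $\Delta_K$ I may assume $K$ itself is the instanton L-space knot.

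Next I would pin down $\Delta_K(t)$. By \eqref{eq:categorify} together with the displayed form of $\KHI(K)$, the polynomial $\Delta_K$ is supported in exactly the exponents $\pm g$, $\pm(g-1)$, $0$, with each nonzero coefficient equal to $\pm1$, and the symmetry $\KHI(K,i)\cong\KHI(K,-i)$ forces the coefficients at $\pm g$ to agree and those at $\pm(g-1)$ to agree. Imposing $\Delta_K(1)=1$ then forces the constant term to be $+1$ and the coefficient at $t^{\pm(g-1)}$ to be the negative of the coefficient at $t^{\pm g}$, leaving only the sign $\eta$ of the leading coefficient undetermined. This is the single non-formal point: $\eta$ is not visible in $\KHI(K)$ as a graded vector space, and to fix it I would invoke the fact that an instanton L-space knot has an Alexander polynomial of Ozsv\'ath--Szab\'o form (see \cite{bs-lspace}), so that $\eta=+1$. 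Hence
\[ \Delta_K(t) = t^g - t^{g-1} + 1 - t^{1-g} + t^{-g}. \]

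Finally, a short computation gives $\det(K)=|\Delta_K(-1)|=|4(-1)^g+1|$, which equals $5$ when $g$ is even and $3$ when $g$ is odd; since $\det(K)=5$, the genus $g$ is even. Moreover $g\neq 2$, for otherwise the classification of genus-$2$ instanton L-space knots in \cite[Corollary~1.8]{frw-cinquefoil} would give $K=T(\pm2,5)$, contrary to hypothesis. Therefore $g\geq 4$ and is even, which is the assertion. The only real obstacle is the middle step's appeal to the Ozsv\'ath--Szab\'o constraint on the Alexander polynomials of L-space knots; everything else is dictated by the structural results of Section~\ref{sec:proofs} and arithmetic.
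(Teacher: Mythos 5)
Your argument is essentially the same as the paper's; the only differences are minor reorderings and one citation. You first pin down the shape of $\Delta_K$ from $\Delta_K(1)=1$ alone, leaving the leading sign $\eta$ ambiguous, then fix $\eta=+1$ using the L-space constraint on Alexander polynomials, and finally impose $\det(K)=5$ to force $g$ even. The paper instead applies $\Delta_K(1)=1$ and $\Delta_K(-1)=\pm5$ together to land on $\Delta_K(t) = (-1)^g(t^g - t^{g-1} + (-1)^g - t^{1-g} + t^{-g})$, and then invokes the sign-alternation property of instanton L-space knot Alexander polynomials to conclude $g$ is even. These are the same facts used in a slightly different order, and both are correct. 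One small caveat: for the crucial ``leading coefficient is $+1$'' (equivalently, Ozsv\'ath--Szab\'o form / alternation) step, you cite \cite{bs-lspace}, whereas the paper cites \cite[Theorem~1.9]{li-ye-surgery}; the paper uses \cite[Theorem~1.15]{bs-lspace} only for fiberedness and strong quasipositivity. You should double-check that the Alexander-polynomial constraint you need actually appears in the reference you cite, or swap in \cite{li-ye-surgery} as the source.
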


\begin{proof}
Proposition~\ref{prop:dim-khi} tells us once again that $\dim \KHI(K) = 5$, so that either $K$ or its mirror is an instanton L-space knot (hence fibered and strongly quasipositive \cite[Theorem~1.15]{bs-lspace}) and
\[ \KHI(K;\Q) \cong \Q_g \oplus \Q_{g-1} \oplus \Q_0 \oplus \Q_{1-g} \oplus \Q_{-g} \]
by Proposition~\ref{prop:41-or-lspace}.  This determines the Alexander polynomial $\Delta_K(t)$ by \eqref{eq:categorify}: applying the conditions $\Delta_K(1)=1$ and $\Delta_K(-1) = \pm \det(K) = \pm 5$, we must have
\[ \Delta_K(t) = (-1)^g \left( t^g - t^{g-1} + (-1)^g - t^{1-g} + t^{-g}\right). \]
In fact, since $K$ is an instanton L-space knot up to mirroring, we can apply \cite[Theorem~1.9]{li-ye-surgery} to deduce that the nonzero coefficients of $\Delta_K(t)$ alternate in sign, so $g$ must be even and then
\[ \Delta_K(t) = t^g - t^{g-1} + 1 - t^{1-g} + t^{-g}. \]
By assumption $g$ is not $T(\pm2,5)$, but there are no other instanton L-space knots of genus $2$ \cite[Corollary~1.8]{frw-cinquefoil}, so then $g$ is at least $4$.
\end{proof}

\begin{proposition} \label{prop:bdc-argument}
Suppose that $\Khr(K;\Z/2\Z)$ is 5-dimensional and that $\det(K)=5$, but that $K$ is not the figure eight or $T(\pm2,5)$.  Then $K$ has signature $\sigma(K) = \pm2g(K)$, and if we write $h = \frac{1}{2}g(K)$, then $h$ is an integer with $h \geq 2$,
and the polynomial
\[ p_h(t) = t^{4h} - t^{4h-1} + t^{2h} - t + 1 \]
is a product of cyclotomic polynomials.
\end{proposition}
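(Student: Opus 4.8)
The plan is to establish that the branched double cover $\dcover(K)$ is a Heegaard Floer L-space, to combine this with strong quasipositivity (which comes from Proposition~\ref{prop:large-genus-alexander}) via the work of Boileau, Boyer, and Gordon \cite{boileau-boyer-gordon-1} in order to force the Seifert form of $K$ to be definite, and then to read off the signature statement and the cyclotomic factorization from that definiteness. For the first point, note that $\dcover(K)$ is a rational homology sphere with $|H_1(\dcover(K);\Z)| = \det(K) = 5$. Ozsv\'ath and Szab\'o's spectral sequence with $\Z/2\Z$ coefficients runs from $\Khr(\mirror{K};\Z/2\Z)$ --- which has the same total dimension, $5$, as $\Khr(K;\Z/2\Z)$ --- to $\hfhat(\dcover(K);\Z/2\Z)$, so that $\dim\hfhat(\dcover(K);\Z/2\Z) \le 5$. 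Since $\dim_{\Z/2\Z}\hfhat(Y;\Z/2\Z) \ge \dim_\Q \hfhat(Y;\Q) \ge |H_1(Y;\Z)|$ for every rational homology sphere $Y$, these inequalities must all be equalities; in particular $\hfhat(\dcover(K);\Q)$ has rank $5 = |H_1|$, so $\dcover(K)$ is an L-space.

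Next I would invoke Proposition~\ref{prop:large-genus-alexander}: replacing $K$ by its mirror if necessary --- which changes neither $\Delta_K$ nor $|\sigma(K)|$, and does not affect whether $p_h$ is a product of cyclotomic polynomials --- we may assume that $K$ itself is an instanton L-space knot, hence fibered and strongly quasipositive, with $\Delta_K(t) = t^g - t^{g-1} + 1 - t^{1-g} + t^{-g}$ for $g = g(K)$ even and at least $4$. Thus $h = \tfrac12 g(K)$ is an integer with $h \ge 2$, and $p_h(t) = t^{g}\Delta_K(t)$. Now $K$ bounds a strongly quasipositive Seifert surface $F$ of minimal genus $g$; its Seifert matrix $V$ is a $2g \times 2g$ integer matrix with $\det V = \pm 1$ (since $\Delta_K$ is monic), and the double cover of $B^4$ branched over a pushed-in copy of $F$ is a smooth four-manifold with boundary $\dcover(K)$ and intersection form $V + V^T$. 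Here is where I would apply the analysis in \cite{boileau-boyer-gordon-1} of strongly quasipositive knots whose branched double covers are L-spaces, concluding that $V + V^T$ is definite. Since $\sigma(K)$ is by definition the signature of $V + V^T$, this forces $\sigma(K) = \pm 2g(K)$, and hence the same for the original knot.

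To obtain the cyclotomic factorization, I would write $V = S + N$ with $S = \tfrac12(V + V^T)$ definite and $N = \tfrac12(V - V^T)$ antisymmetric, factor $S = R^T R$ with $R$ real and invertible, and observe that $V^{-1}V^T$ is then conjugate to $(I+B)^{-1}(I-B)$, where $B = R^{-T}NR^{-1}$ is real and antisymmetric. The eigenvalues of $B$ are purely imaginary, so those of $(I+B)^{-1}(I-B)$ --- and hence the roots of $\Delta_K$, which are, up to taking reciprocals, the eigenvalues of $V^{-1}V^T$ --- all have modulus $1$. Therefore $p_h(t) = t^{g}\Delta_K(t)$ is a monic integer polynomial with $p_h(0) = 1$ all of whose roots lie on the unit circle, so by Kronecker's theorem its roots are roots of unity and $p_h$ is a product of cyclotomic polynomials.

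The hard part is the appeal to \cite{boileau-boyer-gordon-1}: I need to pin down the precise statement there that converts ``$\dcover(K)$ is an L-space'' together with strong quasipositivity into definiteness of the symmetrized Seifert form, and to check that its hypotheses hold --- in particular that it applies to the minimal-genus strongly quasipositive surface of an instanton L-space knot. The remaining pieces --- the Ozsv\'ath--Szab\'o spectral sequence, the $\Z/2\Z$-versus-$\Q$ rank comparison, and the Cayley-transform and Kronecker arguments --- are routine.
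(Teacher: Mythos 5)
Your proof is correct and its first half (establishing that $\dcover(K)$ is a Heegaard Floer L-space via the Ozsv\'ath--Szab\'o spectral sequence, then combining that with the conclusions of Proposition~\ref{prop:large-genus-alexander}) is the same as the paper's. Where you diverge is in the endgame: the paper simply quotes Boileau--Boyer--Gordon \cite[Corollary~1.2]{boileau-boyer-gordon-1} for the statement that a fibered, strongly quasipositive knot whose branched double cover is an L-space has cyclotomic Alexander polynomial, noting that this Corollary is itself deduced by BBG from their \cite[Proposition~6.1]{boileau-boyer-gordon-1} (the signature statement $\sigma(K)=\pm 2g(K)$). You instead take only the signature/definiteness input from BBG and re-derive the cyclotomic factorization yourself, via the Cayley transform $V^{-1}V^T \sim (I+B)^{-1}(I-B)$ with $B$ real antisymmetric, which puts all roots of $\Delta_K$ on the unit circle, and then Kronecker's theorem. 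That argument is sound: since $V+V^T$ is a nondegenerate $2g\times 2g$ symmetric integer form (its determinant is $\pm\Delta_K(-1)=\pm 5\neq 0$) whose signature has absolute value $2g$, it is definite, and your linear algebra and the application of Kronecker's theorem to the monic reciprocal integer polynomial $p_h(t)=t^g\Delta_K(t)$ with $p_h(0)=1$ go through. The ``hard part'' you flag --- that BBG's work yields definiteness of the symmetrized Seifert form --- is exactly what \cite[Proposition~6.1]{boileau-boyer-gordon-1} provides, so your proof is complete once that reference is pinned down. In short, you prove the same thing by unpacking BBG's Corollary~1.2 into its ingredients rather than citing it as a black box, which makes the cyclotomic step more self-contained at the cost of a page of linear algebra.
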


\begin{proof}
Lemma~\ref{lem:f2-to-q} says that $\Khr(K;\Q)$ is also 5-dimensional, so Proposition~\ref{prop:large-genus-alexander} tells us that $g(K)$ is even and at least $4$, so that $h\in\Z$ as claimed; that $K$ is fibered and strongly quasipositive, after possibly replacing it with its mirror; and that
\[ p_h(t) = t^{g(K)} \Delta_K(t), \]
so that $p_h(t)$ is a product of cyclotomic polynomials if and only if $\Delta_K(t)$ is.  Boileau, Boyer, and Gordon \cite[Corollary~1.2]{boileau-boyer-gordon-1} proved that if the branched double cover of a fibered, strongly quasipositive knot $K$ is a Heegaard Floer L-space, then $\Delta_K(t)$ is a product of cyclotomic polynomials; this follows from their observation \cite[Proposition~6.1]{boileau-boyer-gordon-1} that in this case $K$ has signature $\pm2g(K)$.  Thus it suffices to show that $\dcover(K)$ is a Heegaard Floer L-space.

We now apply Ozsv\'ath and Szab\'o's link surgeries spectral sequence \cite{osz-branched}, and in particular the inequality
\[ \det(K) \leq \dim \hfhat(\dcover(K);\Z/2\Z) \leq \dim \Khr(K;\Z/2\Z), \]
to conclude that $\dim \hfhat(\dcover(K);\Z/2\Z) = |H_1(\dcover(K))| = 5$.  In other words, the branched double cover $\dcover(K)$ is a Heegaard Floer L-space, and the proposition follows.
\end{proof}

\begin{proof}[Proof of Theorem~\ref{thm:main-other}]
Suppose that $\Khr(K;\Z/2\Z)$ is 5-dimensional and supported in $\delta$-gradings of a single parity.  Then the same is true of $\Khr(K;\Q)$ by Lemma~\ref{lem:f2-to-q}, and $\det(K)=5$ by Lemma~\ref{lem:det-k}.  Moreover, since $K$ is not the figure eight by assumption, Proposition~\ref{prop:s-genus} says that $s(K) = \pm2g(K)$ and $g(K) \geq 2$.

We apply Proposition~\ref{prop:bdc-argument} and see that in fact $g(K)$ is even and at least $4$, that $\sigma(K) = \pm2g(K)$, and that the polynomial $p_h(t)$ must be a product of cyclotomic polynomials, where $h = \frac{1}{2}g(K)$.  In Proposition~\ref{prop:cyclotomic-product-even} we will prove that this is not the case for any $h \geq 3$, so we must have $h \leq 2$.  But since $2h = g(K) \geq 4$ this leaves only $h=2$, hence $g(K)=4$ as claimed.  Proposition~\ref{prop:large-genus-alexander} establishes all of the remaining conclusions except hyperbolicity.  The Alexander polynomial and genus prevent $K$ from being a torus knot, so we need only show that it cannot be a satellite; this requires substantially different techniques, so we defer it to Proposition~\ref{prop:winding-number-1} below.
\end{proof}

We need the following lemma to prove that knots satisfying the hypotheses of Theorem~\ref{thm:main-other} cannot be satellites.

\begin{lemma} \label{lem:bdc-pattern}
Let $P \subset S^1\times D^2$ be a knot, and suppose that there is a branched double cover
\[ S^1 \times D^2 \to S^1\times D^2 \]
with branch locus $P$.  Then $P$ is isotopic to the core $S^1 \times \{0\}$.
\end{lemma}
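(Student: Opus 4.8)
The plan is to analyse the deck transformation of the cover. Write $q\colon Y\to Z$ for the given branched double cover, with $Y\cong Z\cong S^{1}\times D^{2}$ and branch locus $P\subset Z$; let $\tau\colon Y\to Y$ be the deck involution, and set $\widetilde P = q^{-1}(P) = \operatorname{Fix}(\tau)$. Since $q$ restricts to a homeomorphism $\widetilde P\to P$, the set $\widetilde P$ is again a knot, and it lies in the interior of $Y$ because $P$ lies in the interior of $Z$. In suitable local coordinates a smooth double branched cover along a codimension-two submanifold has the form $(t,z)\mapsto(t,z^{2})$ on $\mathbb R\times D^{2}$, with deck transformation $(t,z)\mapsto(t,-z)$ of Jacobian determinant $+1$; so $\tau$ is orientation-preserving. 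The statement thus reduces to the following: an orientation-preserving involution $\tau$ of $S^{1}\times D^{2}$ whose fixed-point set is a knot $\widetilde P$ in the interior, and whose quotient is $S^{1}\times D^{2}$, must identify $P=\widetilde P/\tau$ with the core of $Z$.

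First I would produce a $\tau$-invariant meridian disk of $Y$. The restriction $q\colon\partial Y\to\partial Z$ is an unbranched double cover of the torus $\partial Z$ (the branch locus is in the interior), so $\tau|_{\partial Y}$ is a free orientation-preserving involution of the torus $\partial Y$, hence a translation. Since $\partial Y$ is compressible in $Y$, the equivariant loop theorem produces a $\tau$-invariant system of disjoint compressing disks, which in the solid torus are parallel meridian disks. If none of them is $\tau$-invariant, then $\tau$ permutes the cyclically ordered system of $n$ such disks, and the $n$ complementary regions, fixing none of the disks; if it did so by a rotation it would also be fixed-point free on $Y$, contradicting $\operatorname{Fix}(\tau)=\widetilde P\neq\emptyset$, so it acts by a reflection, which then preserves a complementary region $D^{2}\times I$ while exchanging its two ends, and the level $D^{2}\times\{\tfrac12\}$ is an invariant meridian disk. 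Call the resulting invariant disk $\widetilde D$. The induced involution $\tau|_{\widetilde D}$ of the disk $\widetilde D$ has fixed-point set $\widetilde P\cap\widetilde D$ disjoint from $\partial\widetilde D\subset\partial Y$, so it is neither the identity (which would force $\widetilde D\subset\widetilde P$) nor conjugate to a reflection (whose fixed set is a diameter meeting $\partial\widetilde D$); hence $\tau|_{\widetilde D}$ is conjugate to rotation by $\pi$, $\widetilde P$ meets $\widetilde D$ transversally in exactly one point, and $\tau$ acts freely on $\partial\widetilde D$.

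Now I would cut along $\widetilde D$. Cutting $Y$ open along the meridian disk $\widetilde D$ yields a ball $B^{3}$, and (since $\tau$ preserves $\widetilde D$ together with its coorientation) $\tau$ descends to an orientation-preserving involution $\overline\tau$ of $B^{3}$ whose fixed-point set is the properly embedded arc $\alpha$ obtained by cutting $\widetilde P$ at its point of intersection with $\widetilde D$. By the solution of the Smith conjecture --- in the form that an orientation-preserving smooth involution of $B^{3}$ with fixed-point set a properly embedded arc is conjugate to a linear rotation (whose fixed arc is an unknotted diameter) --- the branched quotient $B^{3}/\overline\tau$ is again a $3$-ball, in which the image of $\alpha$ is an unknotted properly embedded arc. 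Descending: $D_{Z}:=q(\widetilde D)=\widetilde D/\tau$ is a properly embedded disk in $Z$ (a quotient of a disk by a rotation), and since cutting commutes with passing to the quotient, $Z$ cut open along $D_{Z}$ is $(Y\,|\,\widetilde D)/\overline\tau=B^{3}/\overline\tau\cong B^{3}$; hence $D_{Z}$ is a meridian disk of the solid torus $Z$. Moreover $P$ meets $D_{Z}$ transversally in a single point, and cutting along $D_{Z}$ turns $P$ into the image of $\alpha$, an unknotted arc in the complementary ball. A knot in a solid torus meeting a meridian disk once and unknotted in the complementary ball is isotopic to the core, so $P$ is isotopic to $S^{1}\times\{0\}$.

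The hard part will be the middle step: pinning the involution $\overline\tau$ down tightly enough to rule out a knotted fixed arc, which is genuine three-manifold rigidity input --- the equivariant loop theorem together with the Smith conjecture (equivalently, the geometrization of three-orbifolds applied to the quotient orbifold, whose underlying space is $B^{3}$; or, in one package, the classification of orientation-preserving involutions of the solid torus). I do not expect to be able to avoid input of that strength. The remaining ingredients --- the orientation computation, the extraction of a single invariant disk from the equivariant family, and the final recognition of the core --- are routine, although the second deserves the care indicated above.
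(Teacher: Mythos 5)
Your proof is correct in outline and takes a genuinely different route from the paper's. The paper argues indirectly: it forms the satellite $K = P(T(p,q))$ for a large torus knot $T(p,q)$, writes $\dcover(K)$ as a Dehn filling of a double cover of the torus knot exterior, and then invokes Heil's result on Dehn fillings of Seifert fibered spaces, Motegi's folklore classification of knots with small Seifert fibered branched double covers, Oertel's theorem that Montesinos knots with three tangles are not satellites, Cromwell's criterion for composite satellites, and Waldhausen's theorem that $\dcover(K_0)\cong S^3$ forces $K_0$ to be unknotted. Your approach instead analyzes the deck involution $\tau$ directly: find a $\tau$-invariant meridian disk via the equivariant Dehn lemma of Meeks--Yau, cut along it to reduce to an orientation-preserving involution of $B^3$ with fixed set a properly embedded arc, and then invoke the ball form of the Smith conjecture to see the arc is a diameter, so that the quotient disk is a meridian disk of $Z$ meeting $P$ once with unknotted complementary arc. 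This is essentially the classification of involutions of the solid torus (Kim--Tollefson), applied directly; it is arguably cleaner and more natural than the paper's satellite trick, and notably both proofs rely on Smith conjecture strength input, since the Waldhausen result the paper cites is exactly the involution case of the Smith conjecture.

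One detail in your middle step deserves a cleaner treatment. After the equivariant Dehn lemma you consider the possibility that $\tau$ acts on a cyclic system of $n$ disjoint meridian disks by a reflection, preserving a complementary region $D^2\times I$ and swapping its ends, and you assert the ``middle level'' $D^2\times\{\tfrac12\}$ is an invariant meridian disk; but without further argument there is no reason the involution on that region is a product. The easiest fix is to observe that the case $\tau D\cap D=\emptyset$ cannot occur at all: the two disks $D,\tau D$ cut $Y$ into two balls $R_1,R_2$, and if $\tau$ swaps them then $\operatorname{Fix}(\tau)\subset D\cup\tau D$, on which $\tau$ acts freely, so $\operatorname{Fix}(\tau)=\emptyset$, a contradiction; while if $\tau$ preserves each $R_i$ then since $\operatorname{Fix}(\tau)=\widetilde P$ is connected and disjoint from $D\cup\tau D$ it lies in one $R_i$, so $\tau$ restricts to a free involution of the other ball $R_j$, impossible by Euler characteristic. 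Hence the Meeks--Yau disk is already $\tau$-invariant and you can proceed directly to cutting.
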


\begin{proof}
Fix a nontrivial torus knot $T = T(p,q)$, and consider the satellite $K = P(T)$.  We form the branched double cover $\dcover(K)$ by taking a double cover $X$ of $S^3 \setminus N(T)$, which may or may not be connected a priori, and gluing it to the branched double cover $\Sigma_P$ of $P \subset S^1\times D^2$.  Since the latter has connected boundary, so does $X$, so $X$ is connected.  We also know that $X$ is Seifert fibered over a disk, since the same is true of $S^3 \setminus N(T)$; and that it has $r=2$ singular fibers since it is not a solid torus.  (We recall that the knot complement has base orbifold $D^2(|p|,|q|)$.)

Gluing the solid torus $\Sigma_P$ to $X$ amounts to a Dehn filling of $X$, so by \cite[Proposition~2]{heil} there are now two possibilities:
\begin{itemize}
\item the Seifert fibration extends to $\dcover(K)$, with at most $r+1=3$ singular fibers, or
\item we have filled the fiber slope, and the resulting $\dcover(K)$ is a connected sum of $r$ nontrivial lens spaces.
\end{itemize}
In the latter case, $K$ is a connected sum of $r$ nontrivial knots, because the branched double cover of a prime knot is prime  \cite[Proposition~5.1]{hedden-ni}.  But as a satellite of a torus knot, it can only be composite if the pattern $P$ has wrapping number $1$, see e.g.\ \cite[Theorem~4.4.1]{cromwell-book}.  In this case $P$ is a connected sum of the core $C_0 = S^1 \times \{0\}$ with some other knot $K_0 \subset S^3$, and we have
\[ \Sigma_P \cong \dcover(C_0) \# \dcover(K_0) \cong (S^1\times D^2) \# \dcover(K_0). \]
Thus $\dcover(K_0) \cong S^3$, which implies that $K_0$ is unknotted \cite{waldhausen-involution} and hence that $P$ is isotopic to the core $C_0$.

In the remaining case, we know that $\dcover(K)$ is a small Seifert fibered space: it has base $S^2$ and at most three singular fibers.  We can also arrange for $\pi_1(\dcover(K))$ to be infinite by taking both $p$ and $q$ to be large.  Then a folklore result (see e.g.\ \cite[Proposition~3.3]{motegi-note} for details) says that $K$ must be either a torus knot or a Montesinos knot with three rational tangles.  These are never nontrivially satellite knots -- the Montesinos case is due to Oertel \cite[Corollary~4]{oertel-star} -- so either $P$ is contained in a ball inside $S^1\times D^2$, in which case its branched double cover cannot actually be $S^1\times D^2$, or $P$ is isotopic to the core circle $S^1 \times \{0\}$, as claimed.
\end{proof}

The following is the last remaining claim of Theorem~\ref{thm:main-other}.

\begin{proposition} \label{prop:winding-number-1}
Let $K$ be a knot such that $\dim \Khr(K;\Z/2\Z) = 5$ and $\det(K) = 5$.  Then $K$ is not a satellite knot.
\end{proposition}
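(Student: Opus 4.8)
The plan is to argue by contradiction: suppose $K$ is a satellite with companion a nontrivial knot $J$ and pattern $P \subset S^1 \times D^2$, with winding number $w \geq 0$. The main structural tool is the behavior of the branched double cover $\dcover(K)$, which by Proposition~\ref{prop:bdc-argument} (after possibly replacing $K$ with its mirror) we know to be a Heegaard Floer L-space: indeed the hypotheses force $\dim \hfhat(\dcover(K);\Z/2\Z) = |H_1(\dcover(K))| = 5$. We also know from Theorem~\ref{thm:main-other}'s machinery that $K$ (or its mirror) is an instanton L-space knot of genus $4$, with $\Delta_K(t) = t^4 - t^3 + 1 - t^{-3} + t^{-4}$, and in particular $K$ is fibered and strongly quasipositive; I would use these facts freely.

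First I would split into cases by the winding number $w$. If $w = 0$, then $\dcover(K)$ is built by gluing the branched double cover $\Sigma_P$ of $P \subset S^1 \times D^2$ to \emph{two} disjoint copies of the exterior of $J$ (the double cover of the solid-torus complement is disconnected when $w$ is even, and since $w=0$ there is no branching datum to reconnect it). An L-space cannot contain an incompressible torus or split off a connected summand with nontrivial $H_1$ coming from a knot exterior in this way — concretely, $\dcover(K)$ would contain the separating torus $\partial N(J)$, and gluing two nontrivial knot exteriors produces a manifold with $\dim \hfhat$ strictly larger than $|H_1|$ unless both pieces are solid tori, contradicting that $J$ is nontrivial. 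If $w \geq 1$ is odd, the double cover of $S^3 \setminus N(J)$ is connected, and $\dcover(K)$ decomposes along an incompressible torus into $\Sigma_P$ and a (connected) double-branched-type cover of the $J$-exterior; an L-space is irreducible and atoroidal-in-the-appropriate-sense only when one side is a solid torus, which by Lemma~\ref{lem:bdc-pattern} forces $P$ to be the core, contradicting that $K$ is a satellite (or that $J$ is nontrivial). For $w \geq 2$ even one argues as in the $w=0$ case, again producing an essential torus or a forbidden reducibility.

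The cleanest route through the odd-winding-number case is precisely Lemma~\ref{lem:bdc-pattern}: if $\Sigma_P$ were itself a solid torus, the lemma says $P$ is isotopic to the core and $K$ is not a genuine satellite. So the real content is showing that $\Sigma_P$ \emph{must} be a solid torus, i.e.\ ruling out the alternative that $\dcover(K)$ is obtained by gluing two manifolds with incompressible boundary tori. This I would handle using the L-space condition: Heegaard Floer L-spaces have no essential tori unless one complementary piece is a solid torus (this is the standard fact that an L-space spliced along a torus from two pieces forces one piece to be a solid torus, as in Boyer–Gordon–Watson and the L-space gluing results), so $\dcover(K) = \Sigma_P \cup_T (\text{cover of } J\text{-exterior})$ being an L-space with $J$ nontrivial forces $\Sigma_P \cong S^1 \times D^2$, and then Lemma~\ref{lem:bdc-pattern} gives the contradiction. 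The even-winding-number cases are subsumed because there the cover of the $J$-exterior is disconnected, so after splicing we still see an essential torus separating a nontrivial piece, and the same L-space obstruction applies; alternatively one notes the homology $|H_1(\dcover(K))| = 5$ together with $w$ even forces $\Delta_J$ to divide a factor of $5$, constraining $J$ severely.

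I expect the main obstacle to be the bookkeeping around \emph{which} double cover of the companion exterior appears, and ensuring the torus along which one splices is genuinely incompressible in $\dcover(K)$ (it could in principle compress into $\Sigma_P$, but that is exactly the solid-torus case handled by Lemma~\ref{lem:bdc-pattern}). Making the L-space gluing obstruction precise — citing the correct statement that an L-space obtained by toroidal gluing forces a solid-torus side — is where care is needed; everything else is a clean case analysis on the parity and size of the winding number, anchored by the already-established fact that $\dcover(K)$ is an L-space with $|H_1| = 5$.
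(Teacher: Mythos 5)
Your plan has a serious gap at its central step: you assert that ``Heegaard Floer L-spaces have no essential tori unless one complementary piece is a solid torus,'' and you attribute this to Boyer--Gordon--Watson and L-space gluing results. No such theorem exists. Toroidal L-spaces are plentiful; in fact the paper's own argument relies on Hanselman--Rasmussen--Watson's classification \cite[Theorem~7.20]{hrw} of \emph{prime toroidal} L-spaces $Y$ with $|H_1(Y)|=5$, which shows they are precisely the unions of two trefoil exteriors glued along their boundary tori. So in the case where $\Sigma_P$ has incompressible boundary, the torus can absolutely be essential in an L-space; the real work is to identify both sides as trefoil exteriors via HRW, observe that this would give a double cover $S^3\setminus N(T(\pm2,3)) \to S^3\setminus N(C)$, and then use Gonzalez-Acu\~na--Whitten plus the fact that trefoils have no cyclic $\pm2$-surgeries \cite{km-su2} to derive a contradiction. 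Your proposed shortcut replaces this with an invocation of a false fact, so the argument does not go through.

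There are two further points worth flagging. First, you never establish that $K$ (and hence $\dcover(K)$) is prime, which is needed both to apply the HRW classification and to run the connected-sum argument when $\Sigma_P \cong (S^1\times D^2)\#Z$ with $Z\neq S^3$; the paper gets primality of $K$ from the K\"unneth formula for $\Khr$ over $\Z/2\Z$ together with unknot detection. Second, your case split on the winding number $w$ is doing more work than necessary: Boileau--Boyer--Gordon \cite[Proposition~6.2 and Remark~6.3]{boileau-boyer-gordon-1} directly forces $P$ to have winding number $1$ once $K$ is known to be fibered, strongly quasipositive, and to have an L-space branched double cover, so the $w=0$ and $w\geq 2$ even cases never arise. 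The one part of your outline that matches the paper is the final appeal to Lemma~\ref{lem:bdc-pattern} once $\Sigma_P$ is shown to be a solid torus; the path to that conclusion, however, needs to be the one sketched above rather than the nonexistent L-space gluing obstruction.
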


\begin{proof}
We first show that $K$ is prime: if $K \cong K_1 \# K_2$, then by the K\"unneth formula for reduced Khovanov homology over $\Z/2\Z$ we have
\[ \dim \Khr(K_1) \cdot \dim \Khr(K_2) = \dim \Khr(K) = 5, \]
so $\dim \Khr(K_i) = 1$ for some $i$, but then $K_i$ must be the unknot \cite{km-unknot}.  This also implies in turn that the branched double cover $\dcover(K)$ is prime, as in \cite[Proposition~5.1]{hedden-ni}.

Now we suppose that $K$ is a nontrivial satellite, with pattern $P \subset S^1 \times D^2$ and companion $C \subset S^3$.  By assumption $P$ cannot be isotopic to the core $S^1 \times \{0\}$, and $C$ cannot be the unknot.  We recall from Theorem~\ref{thm:main-other} that $K$ (up to mirroring) is fibered and strongly quasipositive, and from the proof of Proposition~\ref{prop:bdc-argument} that $\dcover(K)$ is a Heegaard Floer L-space.  It thus follows from \cite[Proposition~6.2 and Remark~6.3]{boileau-boyer-gordon-1} that $P$ must have winding number 1.

Since $P$ has odd winding number, we can write $\dcover(K)$ as a union
\[ \dcover(K) \cong X_2(C) \cup_{T^2} \Sigma_P, \]
where $X_2(C)$ is a connected double cover of the exterior $S^3 \setminus N(C)$, and $\Sigma_P$ is a double cover of $S^1\times D^2$ branched over $P$; these pieces are glued along their respective torus boundaries.  We note that $\partial X_2(C)$ is incompressible since $C$ is a nontrivial knot.

Suppose that $\Sigma_P$ has incompressible boundary as well.  Then this torus remains incompressible in $\dcover(K)$.  Hanselman, Rasmussen, and Watson \cite[Theorem~7.20]{hrw} classified the prime, toroidal Heegaard Floer L-spaces $Y$ with $|H_1(Y)|=5$, and showed in particular that they are all built by gluing together a pair of trefoil exteriors.  In each case there is a unique incompressible torus up to isotopy, so we conclude that $X_2(C)$ and $\Sigma_P$ are both trefoil exteriors.  In particular we have a double cover
\[ S^3 \setminus N(T(\pm2,3)) \cong X_2(C) \to S^3 \setminus N(C). \]
Gonzalez-Acu\~na and Whitten \cite[Theorem~3.4]{gonzalez-acuna-whitten} proved in this case that either
\begin{itemize}
\item $C$ is not a torus knot, and then it must admit a cyclic $\pm2$-surgery, which contradicts the main result of \cite{km-su2}; or
\item $C$ is a torus knot $T(p,q)$, and we can write $2 = dpq\pm1$ for some integer $d$, which is also impossible.
\end{itemize}
Thus $\Sigma_P$ has compressible boundary after all, and we can write
\[ \Sigma_P \cong (S^1\times D^2) \# Z \]
for some closed 3-manifold $Z$, which may or may not be $S^3$.

Supposing that $Z$ is different from $S^3$, it now follows that
\[ \dcover(P(C)) \cong \big( X_2(C) \cup (S^1\times D^2) \big) \# Z \]
can only be prime if $S^3$ arises as a Dehn filling of $X_2(C)$, i.e., if $X_2(C)$ is the exterior of some other knot in $S^3$.  Again this is impossible since $C$ is nontrivial \cite[Theorem~3.4]{gonzalez-acuna-whitten}, so $\dcover(K) \cong \dcover(P(C))$ is not prime, which is a contradiction.  So $Z \cong S^3$, and therefore $\Sigma_P$ is a solid torus.  Lemma~\ref{lem:bdc-pattern} now tells us that $P$ must be isotopic to a core of $S^1 \times D^2$.  But in this case $K$ is not a nontrivial satellite of $C$ after all, so we are done.
\end{proof}

\subsection{Factorization of the Alexander polynomial}

In this subsection we prove the following, which completes the proof of Theorem~\ref{thm:main-other}.

\begin{proposition} \label{prop:cyclotomic-product-even}
Fix an integer $h \geq 1$, and define the polynomial
\begin{equation} \label{eq:p_h}
p_h(t) = t^{4h} - t^{4h-1} + t^{2h} - t + 1.
\end{equation}
If $h \geq 3$, then $p_h(t)$ is not a product of cyclotomic polynomials.
\end{proposition}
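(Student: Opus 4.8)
The plan is to invoke Kronecker's theorem: since $p_h$ is monic with integer coefficients and $p_h(0) = 1 \neq 0$, it is a product of cyclotomic polynomials if and only if all $4h$ of its complex roots lie on the unit circle. So it suffices to show that, for $h \ge 3$, some root lies off the circle, and I would do this by a zero-counting argument on the circle itself.

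First I would restrict to $|t| = 1$: writing $t = e^{i\theta}$ and using that $p_h$ is palindromic, one computes
\[
f(\theta) := e^{-2hi\theta}p_h(e^{i\theta}) = 2\cos(2h\theta) - 2\cos\!\big((2h-1)\theta\big) + 1 = 1 - 4\sin\!\Big(\tfrac{(4h-1)\theta}{2}\Big)\sin\!\Big(\tfrac\theta2\Big),
\]
a real trigonometric polynomial of degree $2h$. The zeros of $f$ in $[0,2\pi)$ record, with multiplicity, the roots of $p_h$ on the unit circle; since $f(\theta) = f(2\pi - \theta)$ and $f(0) = 1$, $f(\pi) = 5$ are nonzero, all roots of $p_h$ lie on the circle if and only if $f$ has exactly $2h$ zeros in $(0,\pi)$ counted with multiplicity. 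I will instead bound this number by $2h - 2$. The zeros of $\sin\!\big(\tfrac{(4h-1)\theta}{2}\big)$ divide $(0,\pi)$ into $2h$ subintervals, on $h$ of which this sine is $\le 0$; there $f \ge 1 > 0$, so every zero of $f$ lies in one of the $h$ ``humps'' $J_k = \big(\tfrac{4k\pi}{4h-1}, \tfrac{(4k+2)\pi}{4h-1}\big)$, $0 \le k \le h-1$, on which $\sin\!\big(\tfrac{(4h-1)\theta}{2}\big) > 0$.

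On $J_0$ an elementary estimate ($\sin x \le x$ and $\sin x \le \pi - x$) gives $\sin\!\big(\tfrac{(4h-1)\theta}{2}\big)\sin\!\big(\tfrac\theta2\big) \le \tfrac{\pi^2}{4(4h-1)} < \tfrac14$ whenever $h \ge 3$, so $f > 0$ on $\overline{J_0}$ and $J_0$ contributes no zeros. On each $J_k$ with $1 \le k \le h-1$, I would substitute $u = \tfrac{(4h-1)\theta}{2} - 2k\pi \in (0,\pi)$, which rewrites $f$ as $1 - 4\sin(u)\,w(u)$ with $w(u) = \sin\!\big(\tfrac{u+2k\pi}{4h-1}\big) > 0$ and $\tfrac{w'(u)}{w(u)} = \tfrac{1}{4h-1}\cot\!\big(\tfrac{u+2k\pi}{4h-1}\big)$ positive and strictly decreasing — the cotangent's argument stays in $(0,\tfrac\pi2)$ precisely because $k \le h-1$. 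Then $\tfrac{d}{du}\big[\sin(u)w(u)\big] = w(u)\sin(u)\big(\cot u + \tfrac{w'(u)}{w(u)}\big)$, and $\cot u + \tfrac{w'(u)}{w(u)}$ decreases strictly from $+\infty$ to $-\infty$ on $(0,\pi)$, so $\sin(u)w(u)$ is strictly unimodal on $J_k$; hence $\sin(u)w(u) = \tfrac14$ has at most two solutions there, and a one-line computation that $\big(\sin(u)w(u)\big)'' < 0$ at the maximum shows these count with multiplicity at most two. Summing over $k = 1, \dots, h-1$ yields at most $2(h-1) = 2h - 2 < 2h$ zeros of $f$ in $(0,\pi)$, the desired contradiction.

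The main obstacle is this last step: confining the zeros to the humps is a routine sign check, but establishing that each hump $J_k$ contributes at most two zeros — and tracking multiplicities so that the count is genuinely $2h-2$ rather than just $2h$ — is where the work lies. Everything else (Kronecker's criterion, the trigonometric identity for $f$, and the $J_0$ estimate) is elementary.
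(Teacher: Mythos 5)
Your proof is correct, and it takes a genuinely different route from the one in the paper. The paper's argument is purely algebraic: it applies the Graeffe root-squaring method, first pinning down via special values $\Phi_n(\pm 1)$ and divisibility tricks that any hypothetical cyclotomic factorization must have the shape $\Phi_{10}(t)\cdot\prod_j \Phi_{n_j}(t)$ with each $n_j$ a multiple of $4$ (Lemma~\ref{lem:10-multiples-of-4}), and then deriving a contradiction by tracking low-order coefficients of $q_h(t) = \Phi_5(t)\cdot\big(\prod_j \Phi_{n_j/2}(t)\big)^2$ modulo $2$. Your argument instead uses Kronecker's theorem to reduce the problem to showing that some root of $p_h$ leaves the unit circle, and then bounds the number of circle roots by a careful analysis of the real trigonometric polynomial $f(\theta) = 1 - 4\sin\big(\tfrac{(4h-1)\theta}{2}\big)\sin\big(\tfrac{\theta}{2}\big)$: no zeros on the first positive hump because $\tfrac{\pi^2}{4(4h-1)} < \tfrac14$ for $h \geq 3$, and at most two (with multiplicity, via $g''(u^*) < 0$ at the unique interior maximum) on each of the remaining $h-1$ humps, for a total of at most $2h-2 < 2h$ in $(0,\pi)$. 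I checked the details and they hold; in particular your $J_0$ bound fails exactly for $h \leq 2$, as it must, since $p_1 = \Phi_{10}$ and $p_2 = \Phi_{10}\Phi_{12}$. Amusingly, the paper's acknowledgements thank the referee for suggesting an analytic proof of exactly this proposition, and yours is a clean realization of that suggestion. What each approach buys: the Graeffe argument is self-contained and elementary-algebraic, and it also yields structural side information (e.g.\ that $h \equiv 1, 2 \pmod 5$ is forced, and what cyclotomic factors could conceivably appear), whereas the Kronecker-plus-zero-counting argument is shorter once the trigonometric identity is in hand and gives a quantitative statement — at least $4$ roots of $p_h$ lie off the unit circle for $h \ge 3$ — but no arithmetic information about hypothetical factorizations.
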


\begin{remark} \label{eq:factor-small-even-g}
By contrast, we note that $p_1(t) = \Phi_{10}(t)$ and $p_2(t) = \Phi_{10}(t)\cdot \Phi_{12}(t)$.
\end{remark}

In the proof of Proposition~\ref{prop:cyclotomic-product-even}, we will adapt an algorithm called the ``Graeffe'' method \cite{bradford-davenport} for recognizing cyclotomic polynomials.  The idea is that if
\[ p(t) = t^d + a_{d-1}t^{d-1} + \dots + a_1t + a_0 \]
has roots $\alpha_1,\alpha_2,\dots,\alpha_d$, then we can split $p$ into its even and odd parts by writing
\[ p(t) = p_e(t^2) + t\cdot p_o(t^2) \quad\text{where}\quad
\begin{cases}
p_e(t) = a_0 + a_2t + a_4t^2 + \dots, &\\
p_o(t) = a_1 + a_3t + a_5t^2 + \dots. &
\end{cases} \]
Then Graeffe's root-squaring method says that the polynomial
\[ q(t) = (-1)^d\left( p_e(t)^2 - t\cdot p_o(t)^2 \right) \]
has roots $\alpha_1^2, \alpha_2^2, \dots, \alpha_d^2$.  For example, if $p(t) = \Phi_n(t)$ then we will have
\begin{equation} \label{eq:cyclotomic-q}
q(t) = \begin{cases} \Phi_n(t), & n\text{ odd} \\ \Phi_{n/2}(t), & n\text{ even but not a multiple of }4 \\ \left(\Phi_{n/2}(t)\right)^2, & n\text{ a multiple of 4}. \end{cases}
\end{equation}

We note that if $p(t) = p_h(t)$ is the polynomial given in \eqref{eq:p_h}, then we have
\begin{align*}
p_e(t) &= t^{2h} + t^h + 1, &
p_o(t) &= -t^{2h-1} - 1
\end{align*}
and so the root-squaring method produces the polynomial
\begin{equation} \label{eq:q_h}
\begin{aligned}
q_h(t) &= (t^{2h} + t^h + 1)^2 - t(-t^{2h-1}-1)^2 \\
&= t^{4h} - t^{4h-1} + 2t^{3h} + t^{2h} + 2t^{h} - t + 1.
\end{aligned}
\end{equation}

Before we begin the proof of Proposition~\ref{prop:cyclotomic-product-even}, we will first recall some facts about special values of cyclotomic polynomials.

\begin{lemma} \label{lem:special-values}
For all $n \geq 2$, we have
\[
\Phi_n(1) = \begin{cases} p & n=p^e \text{ is a prime power} \\ 1 & otherwise \end{cases}
\quad\text{and}\quad
\Phi_n(-1) = \begin{cases}
1 & n\text{ odd} \\
\Phi_{n/2}(1) & n\text{ even}.
\end{cases}
\]
\end{lemma}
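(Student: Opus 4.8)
The plan is to deduce all four formulas from the fundamental factorization $\prod_{d\mid n}\Phi_d(t) = t^n-1$, combined with strong induction on $n$: the prime-power case of $\Phi_n(1)$ is handled by an explicit formula, and the remaining three cases all run through the same inductive bookkeeping.

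For $\Phi_n(1)$ when $n = p^e$ is a prime power, I would use the explicit identity $\Phi_{p^e}(t) = \frac{t^{p^e}-1}{t^{p^{e-1}}-1} = 1 + t^{p^{e-1}} + t^{2p^{e-1}} + \dots + t^{(p-1)p^{e-1}}$, whose value at $t=1$ is visibly $p$. For $\Phi_n(1)$ when $n\geq 2$ is not a prime power: dividing $t^n - 1$ by $\Phi_1(t) = t-1$ and evaluating at $t = 1$ gives $\prod_{d\mid n,\,d>1}\Phi_d(1) = n$. Writing $n = p_1^{e_1}\cdots p_k^{e_k}$ with $k\geq 2$, the prime-power divisors $p_j^{i}$ (for $1\leq i\leq e_j$) already contribute $\prod_j p_j^{e_j} = n$ to this product by the previous paragraph, while every remaining divisor $d$ with $1 < d < n$ is a non-prime-power and so contributes $\Phi_d(1) = 1$ by the inductive hypothesis; hence $\Phi_n(1) = 1$.

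For $\Phi_n(-1)$ with $n$ odd, the same mechanism applies: evaluating $\prod_{d\mid n}\Phi_d(t) = t^n - 1$ at $t = -1$ gives $\prod_{d\mid n}\Phi_d(-1) = -2$, and since $\Phi_1(-1) = -2$ this forces $\prod_{d\mid n,\,d>1}\Phi_d(-1) = 1$; every divisor $d > 1$ of an odd $n$ is itself odd and $\geq 3$, so induction kills all the factors with $d < n$ and leaves $\Phi_n(-1) = 1$. For $\Phi_n(-1)$ with $n$ even, I would instead invoke the standard reduction identities for cyclotomic polynomials: writing $n = 2k$, one has $\Phi_{2k}(t) = \Phi_k(t^2)$ when $k$ is even and $\Phi_{2k}(t) = \Phi_k(-t)$ when $k > 1$ is odd, so substituting $t = -1$ yields $\Phi_n(-1) = \Phi_k(1) = \Phi_{n/2}(1)$ in both cases; the remaining case $n = 2$ is checked directly, $\Phi_2(-1) = 0 = \Phi_1(1)$.

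This is all essentially classical, so there is no genuine obstacle; the only points needing care are the (near-vacuous) base cases of the inductions and the degenerate value $\Phi_1(1) = 0$, which is exactly what appears on the right-hand side when $n = 2$. If one prefers not to cite the reduction identities, the even case of $\Phi_n(-1)$ can instead be obtained directly: for $n\geq 3$ the exponent $\phi(n)$ is even, so $\Phi_n(-1) = \prod_{\zeta}(1+\zeta)$ over the primitive $n$th roots of unity $\zeta$, and multiplication by $-1$ carries the primitive $n$th roots bijectively onto the primitive $n$th roots when $4\mid n$ and onto the primitive $(n/2)$th roots when $n/2$ is odd, so in either case $\Phi_n(-1) = \Phi_{n/2}(1)$.
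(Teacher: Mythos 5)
Your proof is correct and follows essentially the same route as the paper: evaluate $\prod_{d\mid n,\,d>1}\Phi_d(t)$ at $t=\pm1$ and induct, then handle even $n$ via the reduction identities $\Phi_{2k}(t)=\Phi_k(-t)$ ($k$ odd) and $\Phi_{2k}(t)=\Phi_k(t^2)$ ($k$ even). The only cosmetic differences are that you compute $\Phi_{p^e}(1)=p$ from the explicit formula rather than by induction on $e$, and that you (rightly, though it does not affect the conclusion) flag the $k=1$ sign issue in the reduction identity by checking $n=2$ separately.
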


\begin{proof}
We evaluate both sides of
\[ x^{n-1} + x^{n-2} + \dots + x + 1 = \prod_{\substack{d\mid n \\ d \neq 1}} \Phi_d(x) \]
at $x=1$ and at $x=-1$ to conclude that 
\begin{align*}
\prod_{\substack{d\mid n \\ d \neq 1}} \Phi_d(1) &= n, &
\prod_{\substack{d\mid n \\ d \neq 1}} \Phi_d(-1) &= \begin{cases} 0 & n\text{\ even} \\ 1 & n\text{ odd}. \end{cases}
\end{align*}
If $n=p^e$ is a prime power, with $e \geq 1$, then the first of these implies by induction on $e$ that $\Phi_{p^e}(1) = p$, and then it follows that $\Phi_n(1) = 1$ if $n$ is not a prime power.  The second equation similarly implies by induction that $\Phi_n(-1) = 1$ for all odd $n$.

In the remaining cases, we wish to evaluate $\Phi_n(-1)$ where $n=2k$ is even.  We observe that if $k$ is odd then $\Phi_n(x) = \Phi_k(-x)$, and if $k$ is even then $\Phi_n(x) = \Phi_k(x^2)$.  In either case it follows that $\Phi_n(-1) = \Phi_k(1)$.
\end{proof}

We now begin to determine which cyclotomic polynmoials can divide $p_h(t)$.

\begin{lemma} \label{lem:cyclotomic-odd}
If $n$ is odd, then $\Phi_n(t)$ does not divide $p_h(t)$.
\end{lemma}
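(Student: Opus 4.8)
The plan is to evaluate $p_h$ at a well-chosen root of unity and show the value is nonzero, which would be impossible if $p_h$ were divisible by the corresponding cyclotomic polynomial. The natural test point for showing that $\Phi_n(t) \nmid p_h(t)$ with $n$ odd is $t = -1$: since $n$ is odd, $-1$ is not an $n$th root of unity, but more to the point, we can hope to read off $p_h(-1)$ directly and find it is a nonzero constant. Indeed, plugging $t=-1$ into \eqref{eq:p_h} gives
\[
p_h(-1) = (-1)^{4h} - (-1)^{4h-1} + (-1)^{2h} - (-1) + 1 = 1 + 1 + 1 + 1 + 1 = 5.
\]
So $p_h(-1) = 5 \neq 0$ for every $h$.

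Now suppose for contradiction that some $\Phi_n(t)$ with $n$ odd divides $p_h(t)$, and write $p_h(t) = \Phi_n(t)\,g(t)$ with $g(t) \in \Z[t]$ (monic, since $p_h$ and $\Phi_n$ are monic). Evaluating at $t = -1$ gives $5 = \Phi_n(-1)\,g(-1)$. By Lemma~\ref{lem:special-values}, since $n$ is odd we have $\Phi_n(-1) = 1$ when $n \geq 3$, and $\Phi_1(-1) = -2$; in either case $\Phi_n(-1) \in \{-2, 1\}$. If $n = 1$ then $\Phi_1(t) = t - 1$, but $p_h(1) = 1 - 1 + 1 - 1 + 1 = 1 \neq 0$, so $\Phi_1(t) \nmid p_h(t)$ directly. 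For $n \geq 3$ odd we have $\Phi_n(-1) = 1$, so this congruence alone gives no contradiction — the obstruction must come from elsewhere.

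The cleaner route, then, is to test at $t = 1$ instead, or to combine both evaluations. We have $p_h(1) = 1$, so if $\Phi_n(t)\mid p_h(t)$ then $\Phi_n(1)\mid 1$, forcing $\Phi_n(1) = \pm 1$; by Lemma~\ref{lem:special-values} this rules out every prime power $n$, and in particular every odd prime power. For odd $n$ that is not a prime power we still need an argument, and here the likely approach is to pass to the root-squared polynomial $q_h(t)$ from \eqref{eq:q_h}: if $\Phi_n(t) \mid p_h(t)$ with $n$ odd, then by \eqref{eq:cyclotomic-q} we also have $\Phi_n(t) \mid q_h(t)$, hence $\Phi_n(t)$ divides $\gcd(p_h(t), q_h(t)) = p_h(t) - q_h(t) \cdot(\text{something})$; computing $q_h(t) - p_h(t) = 2t^{3h} + 2t^h + 2t^{2h}\!\!\pmod{\text{lower order}}$ — more precisely $q_h(t) - p_h(t) = 2t^{3h} + t^{2h} + 2t^h + t^{2h} - \dots$, one extracts a much lower-degree polynomial that $\Phi_n$ must divide, and then iterates or evaluates to reach a contradiction. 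The main obstacle is pinning down this $\gcd$ computation cleanly enough that it applies uniformly to all odd non-prime-power $n$; the evaluations $p_h(1) = 1$ and $p_h(-1) = 5$ handle the prime-power and parity constraints, but the composite odd case genuinely needs the Graeffe step or an explicit factorization estimate, and getting that to close without case analysis on $h$ is where the real work lies.
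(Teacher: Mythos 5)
Your proposal correctly locates the key tool — pass to the Graeffe transform $q_h$ and use the fact that for odd $n$ the squares of primitive $n$th roots of unity are again primitive $n$th roots, so $\Phi_n \mid p_h$ forces $\Phi_n \mid q_h$ and hence $\Phi_n \mid (q_h - p_h)$. But you never carry the computation through, and the place where you stop is exactly where the argument closes. From \eqref{eq:p_h} and \eqref{eq:q_h} the difference is not the messy expression you wrote down; it is exactly
\[
q_h(t) - p_h(t) \;=\; 2t^{3h} + 2t^h \;=\; 2t^h\bigl(t^{2h}+1\bigr),
\]
with the $t^{4h}$, $t^{4h-1}$, $t^{2h}$, $-t$ and constant terms all cancelling. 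Since a cyclotomic polynomial has no root at $0$, this forces $\Phi_n(t) \mid t^{2h}+1 \mid t^{4h}-1$, hence $n \mid 4h$, and as $n$ is odd this gives $n \mid h$. Now if $\zeta$ is any root of $\Phi_n$ we have $\zeta^h = 1$, so
\[
0 = p_h(\zeta) = \zeta^{4h} - \zeta^{4h-1} + \zeta^{2h} - \zeta + 1 = 3 - \zeta - \zeta^{-1},
\]
i.e.\ $\zeta^2 - 3\zeta + 1 = 0$, and $\zeta = \tfrac{1}{2}(3\pm\sqrt{5})$ is not a root of unity. That is the contradiction.

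So the genuine gap is twofold: the expression you give for $q_h - p_h$ is wrong (there is no surviving $t^{2h}$ term, and no ``lower order'' tail — the cancellation is exact), and you never extract the divisibility $n \mid h$ that lets you evaluate $p_h(\zeta)$ and finish. You openly acknowledge that you don't know how to close the composite-odd case; the point is that the closed form of $q_h - p_h$ makes a uniform argument available for \emph{all} odd $n \geq 2$ at once, so the detour through $p_h(\pm1)$ and the prime-power/composite split is unnecessary. (Your $t=-1$ and $t=1$ evaluations are used elsewhere in the paper — in Lemma~\ref{lem:10-multiples-of-4} — but not here.)
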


\begin{proof}
Suppose that $\Phi_n(t)$ divides $p_h(t)$.  Then the squares of the primitive $n$th roots of unity are also primitive $n$th roots of unity, so $\Phi_n(t)$ also divides the polynomial $q_h(t)$ from \eqref{eq:q_h}, and hence it divides the difference
\[ q_h(x) - p_x(h) = 2t^{3h} + 2t^{h} = 2t^{h}(t^{2h}+1). \]
The roots of $\Phi_n(t)$ are all nonzero, so it divides $t^{2h}+1$ and hence $t^{4h}-1$.  But this means that $n$ is an odd divisor of $4h$, so in fact $n$ divides $h$.

Letting $\zeta$ be any root of $\Phi_n(t)$, we have $\zeta^n=1$ and therefore $\zeta^h=1$.  We compute that
\begin{align*}
0 = p_h(\zeta) &= \zeta^{4h} - \zeta^{4h-1} + \zeta^{2h} - \zeta + 1 \\
&= 1 - \zeta^{-1} + 1 - \zeta + 1
\end{align*}
and so $\zeta^2 - 3\zeta + 1 = 0$.  But then $\zeta = \frac{1}{2}(3\pm\sqrt{5})$ is not a root of unity, contradiction.
\end{proof}

\begin{lemma} \label{lem:cyclotomic-2-mod-4}
Let $n=2k$ be twice an odd integer $k \geq 1$, and suppose that $\Phi_n(t)$ divides $p_h(t)$.  Then $n=10$, and $h$ is congruent to either $1$ or $2$ modulo 5.
\end{lemma}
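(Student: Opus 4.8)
The plan is to reduce the statement to the odd-index situation already handled in Lemma~\ref{lem:cyclotomic-odd}. Since $k$ is odd we have $\Phi_{2k}(t) = \pm\Phi_k(-t)$, so $\Phi_n(t)$ divides $p_h(t)$ if and only if $\Phi_k(t)$ divides
\[ p_h(-t) = t^{4h} + t^{4h-1} + t^{2h} + t + 1. \]
For $h=1$ this polynomial is exactly $\Phi_5(t)$, which forces $\Phi_k = \Phi_5$, hence $n=10$ and $h\equiv 1\pmod 5$; so I would assume $h\geq 2$ from here on.

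Next I would apply Graeffe root-squaring to $p_h(-t)$. Because the roots of $p_h(-t)$ are precisely the negatives of the roots of $p_h$, root-squaring carries $p_h(-t)$ to the very same polynomial $q_h(t)$ of \eqref{eq:q_h}. Since $k$ is odd, squaring permutes the primitive $k$th roots of unity, so the divisibility $\Phi_k(t)\mid p_h(-t)$ upgrades to $\Phi_k(t)\mid q_h(t)$, whence $\Phi_k(t)$ divides the difference
\[ q_h(t) - p_h(-t) = -2t^{4h-1} + 2t^{3h} + 2t^h - 2t = -2t\,(t^{3h-1}-1)(t^{h-1}-1). \]
As $\Phi_k$ is irreducible with nonzero roots, it must divide $t^{3h-1}-1$ or $t^{h-1}-1$; that is, $k\mid 3h-1$ or $k\mid h-1$.

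Finally I would feed each alternative back into the equation $p_h(-\omega)=0$ for a primitive $k$th root of unity $\omega$. If $k\mid h-1$ then $\omega^h=\omega$, and the equation collapses to $\omega^4+\omega^3+\omega^2+\omega+1=0$. If $k\mid 3h-1$ then $\gcd(3,k)=\gcd(h,k)=1$, so $\eta:=\omega^h$ is again a primitive $k$th root of unity with $\eta^3=\omega$, and the equation becomes $\eta^4+\eta^3+\eta^2+\eta+1=0$. In either case the minimal polynomial $\Phi_k$ of a primitive $k$th root of unity divides $\Phi_5(t)=t^4+t^3+t^2+t+1$, so $\Phi_k=\Phi_5$ and $k=5$, i.e.\ $n=10$. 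Then $5\mid h-1$ gives $h\equiv 1\pmod 5$, and $5\mid 3h-1$ gives $h\equiv 2\pmod 5$, as claimed.

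The only step that is not purely mechanical is the pivot in the first paragraph: when $n\equiv 2\pmod 4$, Graeffe root-squaring sends the primitive $n$th roots of unity to primitive $k$th roots (not $n$th roots), so one cannot conclude $\Phi_n\mid q_h$ and must instead transfer the whole computation to $\Phi_k$ and $p_h(-t)$. After that reduction the argument is a routine variant of Lemma~\ref{lem:cyclotomic-odd}, the only mild subtlety being the change of variable $\eta=\omega^h$ in the case $k\mid 3h-1$.
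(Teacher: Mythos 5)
Your proof is correct and takes essentially the same route as the paper: Graeffe root-squaring, a divisibility coming from the difference with the squared polynomial, and evaluation at a root of unity to force $\Phi_5$ (equivalently $\Phi_{10}$). The only real distinction is that you pivot to $\Phi_k$ and $p_h(-t)$ at the outset, which cleanly removes the $(-1)^h$ signs that appear in the paper's factorization $q_h(-t)-p_h(t)=2t\bigl(t^{h-1}+(-1)^h\bigr)\bigl(t^{3h-1}+(-1)^h\bigr)$ and lets you evaluate at $\eta=\omega^h$ instead of at $\zeta^3$.
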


\begin{proof}
Since $\Phi_n(t)$ divides $p_h(t)$, the primitive $k$th roots of unity must be roots of $q_h(t)$, so $\Phi_k(t)$ divides $q_h(t)$.  Equivalently, since $\Phi_n(t) = \Phi_k(-t)$ we see that $\Phi_n(t)$ divides $q_h(-t)$ and thus also the difference
\begin{align*}
q_h(-t) - p_h(t) &= 2t^{4h-1} + 2(-1)^ht^{3h} + 2(-1)^h t^h + 2t \\
&= 2t\left(t^{h-1} + (-1)^h\right)\left(t^{3h-1}+(-1)^h\right).
\end{align*}
Then $\Phi_n(t)$ divides either $t^{2h-2}-1$ or $t^{6h-2}-1$, and hence $n$ divides either $2h-2$ or $6h-2$.

Suppose first that $n$ divides $2h-2$, and let $\zeta$ be a root of $\Phi_n(t)$.  Then $\zeta^{2h-2} = 1$, so
\begin{align*}
0 = p_h(\zeta) &= \zeta^{4h} - \zeta^{4h-1} + \zeta^{2h} - \zeta + 1 \\
&= \zeta^4 - \zeta^3  + \zeta^2 - \zeta^1 + 1 = \Phi_{10}(\zeta)
\end{align*}
and since $\zeta$ is a root of the irreducible $\Phi_{10}(t)$, we must have $n=10$.  In this case $2h-2$ is a multiple of $10$, so $h\equiv 1\pmod{5}$.

Now suppose instead that $n$ divides $6h-2$, and let $\zeta$ be a root of $\Phi_n(t)$; then $\zeta^{6h-2}=1$.  We note that $n$ is not a multiple of $3$ since $6h-2$ is not, so $\zeta^3$ is also a primitive $n$th root of unity and therefore a root of $\Phi_n(t)$.  Since $\Phi_n(t)$ divides $p_h(t)$, we have
\begin{align*}
0 = p_h(\zeta^3) &= \zeta^{12h} - \zeta^{12h-3} + \zeta^{6h} - \zeta^3 + 1 \\
&= \zeta^4 - \zeta + \zeta^2 - \zeta^3 + 1 = \Phi_{10}(\zeta)
\end{align*}
and so once again we must have $n=10$.  Now $6h-2$ is a multiple of $10$, and so $h\equiv 2\pmod{5}$.
\end{proof}

\begin{lemma} \label{lem:10-multiples-of-4}
Fix $h \geq 1$.  If $p_h(t)$ is a product of cyclotomic polynomials, then we have
\begin{equation} \label{eq:10-4}
p_h(t) = \Phi_{10}(t) \cdot \prod_{j=1}^k \Phi_{n_j}(t)
\end{equation}
where each $n_j$ is a multiple of 4 but not a power of 2.  In particular $h$ must be congruent to either $1$ or $2$ modulo $5$.
\end{lemma}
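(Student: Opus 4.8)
The plan is to combine Lemmas~\ref{lem:cyclotomic-odd} and~\ref{lem:cyclotomic-2-mod-4} with a single evaluation of $p_h$ at $t=-1$. Suppose $p_h(t)=\prod_j \Phi_{n_j}(t)$, the product taken with multiplicity. Since $p_h(1)=1-1+1-1+1=1\neq 0$, the factor $\Phi_1(t)=t-1$ does not occur, so every $n_j\geq 2$; and Lemma~\ref{lem:cyclotomic-odd} rules out odd $n_j$. Hence every $n_j$ is even and $\geq 2$, so it is either twice an odd integer or a multiple of $4$.

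Next I would compute directly from \eqref{eq:p_h} that $p_h(-1)=1+1+1+1+1=5$, so that $\prod_j \Phi_{n_j}(-1)=5$, and then evaluate each factor using Lemma~\ref{lem:special-values}, which gives $\Phi_{n_j}(-1)=\Phi_{n_j/2}(1)$. If $n_j=2$ this is $\Phi_1(1)=0$, which is impossible, so $n_j\geq 4$. If $n_j=2k$ with $k\geq 3$ odd, then Lemma~\ref{lem:cyclotomic-2-mod-4} forces $n_j=10$, and $\Phi_{10}(-1)=\Phi_5(1)=5$. If instead $n_j=4m$ is a multiple of $4$, then $\Phi_{n_j}(-1)=\Phi_{2m}(1)$, which by the prime-power clause of Lemma~\ref{lem:special-values} equals $2$ precisely when $2m$ is a power of $2$ --- equivalently when $n_j$ itself is a power of $2$ --- and equals $1$ otherwise, i.e.\ when $n_j$ is a multiple of $4$ but not a power of $2$.

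Thus every factor $\Phi_{n_j}(-1)$ lies in $\{1,2,5\}$, with the value $5$ occurring only for $n_j=10$ and the value $2$ occurring only for powers of $2$. Since the product of these factors is $5$, which is odd, no factor equals $2$, so no $n_j$ is a power of $2$; and the value $5$ occurs exactly once, so $n_j=10$ for exactly one $j$. This yields the asserted factorization \eqref{eq:10-4} in which every remaining $n_j$ is a multiple of $4$ but not a power of $2$. Finally, because $\Phi_{10}(t)$ divides $p_h(t)$, Lemma~\ref{lem:cyclotomic-2-mod-4} gives $h\equiv 1$ or $2\pmod 5$.

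Once the three preceding lemmas are available this is pure bookkeeping, so I do not anticipate a real obstacle; the only point demanding care is checking that the case split on even $n_j$ modulo $4$ is exhaustive and that the power-of-$2$ subcase is matched correctly with $\Phi_{n_j}(-1)=2$ through the prime-power clause of Lemma~\ref{lem:special-values}.
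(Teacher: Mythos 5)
Your proof is correct and takes essentially the same approach as the paper: combine Lemmas~\ref{lem:cyclotomic-odd} and \ref{lem:cyclotomic-2-mod-4} with the special-value computations of Lemma~\ref{lem:special-values}. The only small deviation is that you exclude powers of $2$ by noting that $p_h(-1)=5$ is odd, so no factor $\Phi_{n_j}(-1)$ can equal $2$, whereas the paper instead uses the evaluation $p_h(1)=1$ for that step; both work, and yours consolidates the bookkeeping into a single evaluation point.
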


\begin{proof}
By assumption we can find integers $n_0,n_1,\dots,n_k$ ($k\geq 0$) such that
\[ p_h(t) = \Phi_{n_0}(t) \cdot \Phi_{n_1}(t) \cdot \ldots \cdot \Phi_{n_k}(t). \]
By Lemma~\ref{lem:cyclotomic-odd}, all of the $n_j$ must be even, and then by Lemma~\ref{lem:cyclotomic-2-mod-4} they must all be either $10$ or multiples of $4$.  Setting $t=-1$, we have
\[ 5 = p_h(-1) = \prod_{j=0}^k \Phi_{n_j}(-1), \]
and every factor on the right is a nonnegative integer by Lemma~\ref{lem:special-values}; we order them so that $\Phi_{n_0}(-1)=5$ and $\Phi_{n_j}(-1)=1$ for all $j\geq 1$.

Now by Lemma~\ref{lem:special-values}, the integer $\Phi_{n_0}(-1)$ can only be $5$ if $n_0 = 2\cdot 5^e$ for some $e\geq 1$, and then $n_0$ is not a multiple of $4$ so it must be $10$.  Then Lemma~\ref{lem:cyclotomic-2-mod-4} guarantees that $h$ is either $1$ or $2$ modulo 5.  Moreover, we cannot have $n_j=10$ for any other $j \geq 1$, because then $\Phi_{n_j}(-1)$ would not be $1$, so the remaining $n_j$ are all multiples of $4$.

Finally, if instead we set $t=1$ then $p_h(1)=1$ implies that $\Phi_{n_j}(1)=1$ for all $j=1,2,\dots,k$, so we cannot have $n_j = 2^e$ because then Lemma~\ref{lem:special-values} would tell us that $\Phi_{n_j}(1)=2$ instead.
\end{proof}

With Lemma~\ref{lem:10-multiples-of-4} in hand, we can now prove Proposition~\ref{prop:cyclotomic-product-even}.

\begin{proof}[Proof of Proposition~\ref{prop:cyclotomic-product-even}]
Suppose that $p_h(t)$ is a product of cyclotomic polynomials.  Then Lemma~\ref{lem:10-multiples-of-4} says that $h \equiv 1\text{ or }2\pmod{5}$, so $h$ cannot be $3$, $4$, or $5$.  We will therefore require from now on that $h \geq 6$, so that $3h \leq 4h-6$ and hence the polynomial $q_h(t)$ from \eqref{eq:q_h} satisfies
\begin{align*}
q_h(t) &= t^{4h} - t^{4h-1} + 2t^{3h} + t^{2h} + 2t^{h} - t + 1 \\
&=  t^{4h} - t^{4h-1} + O(t^{4h-6}).
\end{align*}

By assumption, $p_h(t)$ has the form \eqref{eq:10-4}.  Following \eqref{eq:cyclotomic-q},
the polynomial $q_h(t)$ must then equal
\[ \Phi_5(t) \cdot \left(\prod_{j=1}^k \Phi_{n_j/2}(t)\right)^2 = t^{4h} - t^{4h-1} + O(t^{4h-6}). \]
The product being squared on the left is a monic polynomial with integer coefficients: if we write
\[ \prod_{j=1}^k \Phi_{n_j/2}(t) = t^{2h-2} + \sum_{i=1}^{2h-2} a_i t^{2h-2-i}, \]
then its square has the form
\[ \left(\prod_{j=1}^k \Phi_{n_j/2}(t)\right)^2 = t^{4h-4} + \sum_{i=1}^{4h-4} b_i t^{4h-4-i} \]
where $b_1 = 2a_1$ and 
\[ b_i = 2a_i + \sum_{j=1}^{i-1} a_ja_{i-j}, \qquad i\geq 2. \]
The summands on the right occur in pairs $a_ja_{i-j} = a_{i-j}a_j$ for $1 \leq j \leq \lfloor\frac{i-1}{2}\rfloor$, so it follows that
\[ b_i \equiv \begin{cases} 0, & i \text{\ odd} \\ (a_{i/2})^2 \equiv a_{i/2}, & i\text{\ even} \end{cases} \pmod{2}. \]
Multiplying by $\Phi_5(t) = t^4+t^3+t^2+t+1$, we see that the $t^{4h-1}$- and $t^{4h-2}$-coefficients of
\[ q_h(t) = (t^4+t^3+t^2+t+1)\left(t^{4h-4} + \sum_{i=1}^{4h-4} b_i t^{4h-4-i}\right) \]
are $-1$ and $0$ respectively, so that
\[ -1 = b_1+1 = 2a_1+1 \quad\Longrightarrow\quad a_1=-1, \]
and
\[ 0 = b_2+b_1+1 = (2a_2+a_1^2) +(2a_1)+1 \quad\Longrightarrow\quad a_2=0. \]
But then we also know that the $t^{4h-5}$-coefficient of $q_h(t)$ is $0$ by assumption, and yet it is also
\[ b_1+b_2+b_3+b_4+b_5 \equiv a_1+a_2 \equiv -1 \pmod{2}, \]
which is a contradiction.
\end{proof}

\section{HOMFLY homology of nearly fibered knots} \label{sec:nearly-fibered}

In this section we prove that reduced HOMFLY homology detects $5_2$ and each of the $P(-3,3,2n+1)$ pretzel knots, where $n\in\Z$.

As background, we recall that Khovanov and Rozansky \cite{khovanov-rozansky-1} defined for each integer $N \geq 1$ a bigraded $\mathfrak{sl}_N$ link homology $\bar{H}_N(K)$, where $\bar{H}_2(K)$ agrees with $\Khr(K)$ up to a change of grading.  In \cite{khovanov-rozansky-2} they also defined a triply graded homology theory $\bar{H}(K)$ whose graded Euler characteristic recovers the HOMFLY polynomial.  Rasmussen \cite{rasmussen-differentials} constructed for each $N \geq 1$ a spectral sequence
\begin{equation} \label{eq:rasmussen-ss}
\bar{H}(K) \ \Longrightarrow\ \bar{H}_N(K)
\end{equation}
which collapses for all large enough $N$. 

\begin{lemma} \label{lem:nf-dim-kh}
Suppose for some knot $K$ and some
\[ J \in \{5_2\} \cup \{ P(-3,3,2n+1) \mid n \in \Z \} \]
that
\[ \bar{H}(K;\Q) \cong \bar{H}(J;\Q) \]
as triply-graded vector spaces.  Then $\Delta_K(t) = \Delta_J(t)$, and
\[ \dim \bar{H}(K;\Q) = \dim \Khr(K;\Q) = \det(K). \]
\end{lemma}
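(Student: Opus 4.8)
The plan is to read the Alexander polynomial off the graded Euler characteristic, and then to sandwich $\dim\Khr(K;\Q)$ between $\det(K)$ and $\dim\bar H(K;\Q)$ using Rasmussen's spectral sequence together with the known computation of $\bar H(J;\Q)$.

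First I would use that the graded Euler characteristic of reduced HOMFLY homology is the (two-variable) HOMFLY polynomial $P_K(a,q)$, normalized so that $P_{\mathrm{unknot}}=1$, where the homological grading supplies the signs and the other two gradings are tracked by $a$ and $q$. An isomorphism $\bar H(K;\Q)\cong\bar H(J;\Q)$ of triply-graded vector spaces forces $\dim\bar H^{i,j,k}(K;\Q)=\dim\bar H^{i,j,k}(J;\Q)$ for all $(i,j,k)$, hence equality of these Euler characteristics, so $P_K=P_J$. Since the Alexander polynomial is a specialization of the HOMFLY polynomial (one recovers $\Delta_K(t)$ by setting $a=1$ and $q=t^{1/2}$, up to the usual symmetrization), this gives $\Delta_K(t)=\Delta_J(t)$, and in particular $\det(K)=|\Delta_K(-1)|=|\Delta_J(-1)|=\det(J)$.

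Next I would apply Rasmussen's spectral sequence \eqref{eq:rasmussen-ss} with $N=2$. Since $\bar H_2$ agrees with $\Khr$ after a change of grading, this yields $\dim\Khr(K;\Q)\le\dim\bar H(K;\Q)$. On the other hand, Lemma~\ref{lem:det-k} gives $\dim\Khr(K;\Q)\ge\det(K)$. Combining these with the previous paragraph, we get
\[
\det(K)\ \le\ \dim\Khr(K;\Q)\ \le\ \dim\bar H(K;\Q)\ =\ \dim\bar H(J;\Q).
\]
It therefore suffices to know that $\dim\bar H(J;\Q)=\det(J)$ for each $J$ in the given list, for then every inequality above collapses (using $\det(K)=\det(J)$), which proves all three asserted equalities simultaneously.

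That last input is where the real content sits: I would supply it from the explicit computations of reduced HOMFLY homology of $5_2$ and of the pretzels $P(-3,3,2n+1)$ used in \cite{bs-nearly-fibered} (which in turn feed $5_2$ and the pretzels through Rasmussen's $N=2$ spectral sequence down to a copy of $\Khr(J;\Q)$ supported in $\delta$-gradings of a single parity), together with the elementary fact that $\det(5_2)=7$ and the corresponding determinant computation for each pretzel. I expect the only delicate points to be bookkeeping ones: pinning down the grading conventions so that the $N=2$ specialization of \eqref{eq:rasmussen-ss} genuinely produces $\Khr$, and correctly invoking the Alexander-polynomial specialization of the HOMFLY polynomial; both are standard and I would keep them brief.
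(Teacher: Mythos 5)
Your overall chain of inequalities
\[
\det(K)\ \le\ \dim\Khr(K;\Q)\ \le\ \dim\bar H(K;\Q) \ =\ \dim\bar H(J;\Q)
\]
together with $\det(K)=\det(J)$ is exactly the paper's argument, and your reduction to the single input $\dim\bar H(J;\Q)=\det(J)$ correctly identifies where the content lies. The Alexander-polynomial part is fine.

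The one place I'd push back is your parenthetical account of how $\dim\bar H(J;\Q)=\det(J)$ is established. Feeding $J$ through the $N=2$ spectral sequence only gives $\dim\bar H(J;\Q)\ge\dim\Khr(J;\Q)\ge\det(J)$, which is the wrong direction; it cannot by itself show the HOMFLY homology is no larger than the determinant. What the paper (following \cite{bs-nearly-fibered}) actually uses is Rasmussen's theorem that a two-bridge knot is ``$N$-thin'' for all $N>4$, so $\dim\bar H_N(J;\Q)=\det(J)$ for such $N$, combined with the fact that the spectral sequence \eqref{eq:rasmussen-ss} collapses for $N$ sufficiently large; this pins down $\dim\bar H(J;\Q)=\det(J)$ directly for $J=5_2$ and for $J=P(-3,3,1)=6_1$. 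The remaining pretzels $P(-3,3,2n+1)$ are not two-bridge, and the extension to them is nontrivial: it relies on Wang's result on the behavior of reduced Khovanov--Rozansky homology, as packaged in \cite[Lemma~9.1]{bs-nearly-fibered}. So your appeal to ``explicit computations'' is pointing at the right reference, but the mechanism needs to be the large-$N$ collapse for two-bridge knots plus Wang's theorem, not the $N=2$ specialization.
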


\begin{proof}
The claim that $\Delta_K(t) = \Delta_J(t)$ follows from the fact that $\bar{H}(K)$ determines the HOMFLY polynomial of $K$, and hence its Alexander polynomial.  We note by taking $t=-1$ that this implies that $\det(K) = \det(J)$.

We now claim that
\begin{equation} \label{eq:dim-homfly}
\dim \bar{H}(K;\Q) = \dim \bar{H}(J;\Q) = \det(J) = \det(K).
\end{equation}
It suffices to prove the middle equality.  When $J = 5_2$ this follows from the fact that $J$ is a two-bridge knot: Rasmussen proved for each $N>4$ that $J$ is ``$N$-thin'' \cite[Theorem~1]{rasmussen-two-bridge}, and hence that the $\mathfrak{sl}_N$ homology $\bar{H}_N(J;\Q)$ has dimension $\det(J)$.  We take $N$ large enough so that the spectral sequence \eqref{eq:rasmussen-ss} collapses for $J$, and thus conclude that $\dim \bar{H}(J;\Q) = \det(J)$.  The case $J = P(-3,3,2n+1)$ is \cite[Lemma~9.1]{bs-nearly-fibered}, in which we use an identical argument for the two-bridge knot $P(-3,3,1) = 6_1$, and then we apply work of Wang \cite{wang-split} to get the general case.

Combining \eqref{eq:dim-homfly} with the case $N=2$ of \eqref{eq:rasmussen-ss} and Lemma~\ref{lem:det-k}, we now see that
\[ \det(K) = \dim \bar{H}(K;\Q) \geq \dim \bar{H}_2(K;\Q) = \dim \Khr(K;\Q) \geq \det(K). \]
Thus equality must hold throughout, completing the proof.
\end{proof}

Lemma~\ref{lem:nf-dim-kh} is enough to determine the instanton knot homology of such a knot $K$.

\begin{lemma} \label{lem:nf-khi}
Suppose for some knot $K$ and some
\[ J \in \{5_2\} \cup \{ P(-3,3,2n+1) \mid n \in \Z \} \]
that
\[ \bar{H}(K;\Q) \cong \bar{H}(J;\Q) \]
as triply-graded vector spaces.  Then $\dim \KHI(K) = \det(K)$, and $K$ has genus $1$ and
\[ \dim \KHI(K,1) = 2. \]
\end{lemma}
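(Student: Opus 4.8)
The plan is to rerun the argument of Proposition~\ref{prop:dim-khi}, feeding in the equality $\dim\Khr(K;\Q)=\det(K)$ from Lemma~\ref{lem:nf-dim-kh} in place of the hypothesis $\dim\Khr(K;\Q)=5$ used there, and then to read off the Alexander grading of $\KHI(K)$ directly from $\Delta_K=\Delta_J$.

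First I would invoke Lemma~\ref{lem:nf-dim-kh} to record that $\Delta_K(t)=\Delta_J(t)$ and that $\dim\Khr(K;\Q)=\det(K)$. The Kronheimer--Mrowka spectral sequence $\Khr(K;\Q)\Rightarrow I^\natural(\mirror{K};\Q)$ from \cite[Proposition~1.2]{km-unknot}, combined with the mirror-invariance of $\dim I^\natural$ and the isomorphism $I^\natural(K;\Q)\cong\KHI(K;\Q)$ of \cite[Proposition~1.4]{km-unknot}, gives the upper bound $\dim\KHI(K;\Q)\le\det(K)$. For the reverse inequality I would apply the triangle inequality to \eqref{eq:categorify}:
\[ \det(K)=|\Delta_K(-1)|=\Bigl|\sum_i(-1)^i\chi(\KHI(K,i))\Bigr|\le\sum_i|\chi(\KHI(K,i))|\le\sum_i\dim\KHI(K,i)=\dim\KHI(K;\Q). \]
Thus $\dim\KHI(K;\Q)=\det(K)$, and every inequality in this chain is forced to be an equality. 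In particular each summand $\KHI(K,i)$ must be supported in a single $\Z/2\Z$-grading, so that $\dim\KHI(K,i)=|\chi(\KHI(K,i))|$ equals the absolute value of the coefficient of $t^i$ in $\Delta_K(t)=\Delta_J(t)$.

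It then remains to compute the relevant Alexander polynomials. One has $\Delta_{5_2}(t)=2t-3+2t^{-1}$, and a direct computation with the standard genus-one Seifert surface of a pretzel knot $P(p,q,r)$ with $p,q,r$ odd gives, up to the usual sign ambiguity,
\[ \Delta_{P(p,q,r)}(t)=\tfrac14(pq+qr+rp+1)(t+t^{-1})+\tfrac12\bigl(1-pq-qr-rp\bigr); \]
specializing to $(p,q,r)=(-3,3,2n+1)$, where $pq+qr+rp=-9$, yields $\Delta_{P(-3,3,2n+1)}(t)=-2t+5-2t^{-1}$ for every $n$. In all cases $\Delta_J$ has breadth $1$ and leading coefficient $\pm2$. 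By the previous paragraph, this forces $\dim\KHI(K,i)=0$ for $|i|\ge2$, so $\KHI(K)$ is supported in Alexander gradings $-1,0,1$ and $g(K)\le1$; since $\det(K)=\det(J)\ne1$ the knot $K$ is not the unknot, so $g(K)=1$. Finally $\dim\KHI(K,1)$ is the absolute value of the leading coefficient of $\Delta_J$, namely $2$.

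I do not anticipate a real obstacle: the argument is a numerical bootstrap off Lemma~\ref{lem:nf-dim-kh}, equation~\eqref{eq:categorify}, and the instanton spectral sequence, and the only non-formal input is the pair of Alexander-polynomial computations. The one step deserving care is the upgrade from the total-dimension statement $\dim\KHI(K;\Q)=\det(K)$ to the Alexander-graded conclusion: this rests on observing that equality in the triangle inequality above forces each $\KHI(K,i)$ to be concentrated in a single mod-$2$ grading, which is precisely what makes $\dim\KHI(K,i)$ --- and in particular $\dim\KHI(K,1)$ --- computable from $\Delta_K$ alone.
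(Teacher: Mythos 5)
Your proposal is correct and follows essentially the same route as the paper: Lemma~\ref{lem:nf-dim-kh} for $\Delta_K=\Delta_J$ and $\dim\Khr(K;\Q)=\det(K)$, the Kronheimer--Mrowka spectral sequence for the upper bound $\dim\KHI(K)\le\det(K)$, the categorification relation~\eqref{eq:categorify} with the triangle inequality for the lower bound, and then reading off $\dim\KHI(K,i)=|a_i|$ once everything is forced to be an equality. The only cosmetic differences are that you re-derive the pretzel Alexander polynomial from the genus-one Seifert-matrix formula (the paper simply records it) and that you get $g(K)=1$ by ruling out the unknot via $\det(K)\neq1$ rather than by citing the genus-detection property directly from $\dim\KHI(K,1)\neq0$; both are equivalent.
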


\begin{proof}
According to Lemma~\ref{lem:nf-dim-kh}, we have
\[ \Delta_K(t) = \Delta_J(t) = \begin{cases} 2t-3+2t^{-1}, & J=5_2 \\ -2t+5-2t^{-1}, & J=P(-3,3,2n+1) \end{cases} \]
and
\[ \dim \Khr(K;\Q) = \det(K) = \begin{cases} 7, & J=5_2 \\ 9, & J=P(-3,3,2n+1). \end{cases} \]
We once again apply Kronheimer and Mrowka's spectral sequence
\[ \Khr(K;\Q) \ \Longrightarrow\ I^\natural(\mirror{K};\Q) \]
of \cite[Proposition~1.2]{km-unknot}, together with the isomorphism
\[ I^\natural(\mirror{K};\Q) \cong \KHI(\mirror{K};\Q) \]
of \cite[Proposition~1.4]{km-unknot} and the invariance of $\dim \KHI$ under orientation reversal, to conclude that
\[ \dim \KHI(K;\Q) \leq \dim \Khr(K;\Q) = \det(K). \]
But the relation \eqref{eq:categorify} implies that if we write $\Delta_K(t) = \sum_i a_i t^i$ then
\[ \dim \KHI(K;\Q) \geq \sum_{i\in\Z} |a_i| \geq |\Delta_K(-1)| = \det(K), \]
so if $\dim \KHI(K;\Q) \leq \det(K)$ as well then each inequality must in fact be an equality.

In other words, we have shown that
\[ \dim \KHI(K,i;\Q) = |a_i| \]
for all $i\in\Z$, where the $a_i$ are the coefficients of $\Delta_J(t)$.  This is zero for all $i \geq 2$ and nonzero for $i=1$, so the genus detection property of $\KHI$ says that $g(K) = 1$.  And since $a_1 = \pm2$ we can also conclude that $\dim \KHI(K,1) = 2$, as claimed.
\end{proof}

Knots satisfying the conclusion of Lemma~\ref{lem:nf-khi} have been completely classified: we achieved the analogue of this with $\hfkhat$ in place of $\KHI$ in \cite{bs-nearly-fibered}, and then Li and Ye \cite{li-ye-nearly-fibered} showed that the same conclusion holds for $\KHI$.  Specifically, for any such knot the sutured complement of a genus-1 Seifert surface must be one of two possible sutured manifolds, up to orientation: for $\hfkhat$ this is \cite[Theorem~5.1]{bs-nearly-fibered}, and then \cite[Example~2.2]{li-ye-nearly-fibered} establishes the same result (with the same sutured manifolds) for $\KHI$.  Then \cite[Theorem~6.1]{bs-nearly-fibered} and \cite[Theorem~7.1]{bs-nearly-fibered} classify the knots that realize these sutured manifolds, by an argument that uses no Floer homology whatsoever.

\begin{proposition}[\cite{bs-nearly-fibered,li-ye-nearly-fibered}] \label{prop:nf-list}
Suppose that $K$ is a genus-1 knot, and that \[ \dim \KHI(K,1;\Q) = 2. \]  Then up to mirroring, either
\begin{itemize}
\item $\Delta_K(t) = 2t-3+2t^{-1}$, and $K$ is either $5_2$, $15n_{43522}$, or $16n_{696530}$; or
\item $\Delta_K(t) = -2t+5-2t^{-1}$, and $K$ is either some $P(-3,3,2n+1)$ or $15n_{115646}$,
\end{itemize}
where the knots $15n_{115646}$ and $16n_{696530}$ are in fact twisted Whitehead doubles $\mathrm{Wh}^\pm(T_{2,3},2)$ of the right-handed trefoil.
\end{proposition}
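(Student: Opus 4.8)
The plan is to assemble two already-available ingredients: the sutured-manifold computation of Li and Ye, and the purely topological classification from \cite{bs-nearly-fibered}. First I would fix a genus-one Seifert surface $R$ for $K$ and form the sutured complement of $R$, that is, the sutured manifold obtained by decomposing the knot exterior along $R$ with its induced sutured structure. The hypothesis $\dim\KHI(K,1;\Q)=2$ translates, via \cite[Example~2.2]{li-ye-nearly-fibered}, into the statement that this sutured manifold is one of exactly two possibilities, up to orientation reversal. These are the very same two sutured manifolds that appear in \cite[Theorem~5.1]{bs-nearly-fibered} under the parallel hypothesis on $\hfkhat$; the only new input is that the sutured instanton homology $\SHI$ obeys the same dimension count in this situation that $\SFH$ obeys there, so the conclusion about the sutured manifold is identical.

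Second, I would invoke the classification of knots realizing these two sutured manifolds, namely \cite[Theorems~6.1 and~7.1]{bs-nearly-fibered}. In the first case this forces $K$, up to mirroring, to be $5_2$, $15n_{43522}$, or $16n_{696530}$, all with $\Delta_K(t) = 2t-3+2t^{-1}$; in the second case it forces $K$ to be some $P(-3,3,2n+1)$ or $15n_{115646}$, all with $\Delta_K(t) = -2t+5-2t^{-1}$. The Alexander polynomials are recovered from the respective sutured manifolds (or simply read off the list of knots), and the identification of $15n_{115646}$ and $16n_{696530}$ with the twisted Whitehead doubles $\mathrm{Wh}^\pm(T_{2,3},2)$ of the right-handed trefoil is likewise part of \cite{bs-nearly-fibered}.

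Since both halves of the argument are already in the literature, there is no substantive new difficulty here. The one point that deserves explicit attention is that Li and Ye's $\KHI$ computation produces exactly the same two sutured manifolds --- not merely sutured manifolds with the same invariants --- as our earlier Heegaard Floer argument; this is precisely the content of \cite[Example~2.2]{li-ye-nearly-fibered}, and it is what allows the topological classification of \cite{bs-nearly-fibered} to be applied verbatim.
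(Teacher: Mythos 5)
Your proposal matches the paper's own justification almost verbatim: the paper likewise invokes \cite[Example~2.2]{li-ye-nearly-fibered} to pin down the two possible sutured manifolds from the $\KHI$ hypothesis (mirroring the $\hfkhat$ version in \cite[Theorem~5.1]{bs-nearly-fibered}), and then cites \cite[Theorems~6.1 and~7.1]{bs-nearly-fibered} for the Floer-free topological classification of which knots realize them. Correct, same approach.
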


Using Shumakovitch's KhoHo program \cite{khoho}, we can compute that
\begin{equation} \label{eq:dim-kh-nf}
\dim \Khr(J;\Q) = \begin{cases} 17, & J = 15n_{43522} \\ 23, & J = 15n_{115646} \\ 25, & J = 16n_{696530}. \end{cases}
\end{equation}
With this information at hand, we are now ready to prove the remaining results from the introduction.

\begin{proof}[Proof of Theorem~\ref{thm:main-52}]
Suppose that $\bar{H}(K) \cong \bar{H}(5_2)$.  Then Lemmas~\ref{lem:nf-dim-kh} and \ref{lem:nf-khi} say that
\[ \Delta_K(t) = 2t - 3 + 2t^{-1}, \]
that $\dim \Khr(K;\Q) = 7$, and that $K$ has genus one, with $\dim \KHI(K,1;\Q) = 2$.  According to Proposition~\ref{prop:nf-list}, it follows that $K$ must be either $5_2$, $15n_{43522}$, or $16n_{696530}$ up to mirroring.  But the triple grading distinguishes $\bar{H}(5_2)$ from $\bar{H}(\mirror{5_2})$, since they have different signatures \cite[Corollary~5.1]{rasmussen-differentials}, so $K$ cannot be $\mirror{5_2}$.  It also cannot be either $15n_{43522}$ or $16n_{696530}$, or their mirrors, because by \eqref{eq:dim-kh-nf} we would then have $\dim \Khr(K;\Q) > 7$.  Thus $K$ must be $5_2$ after all.
\end{proof}

\begin{proof}[Proof of Theorem~\ref{thm:main-pretzel}]
Suppose that $\bar{H}(K) \cong \bar{H}(P(-3,3,2n+1))$ for some fixed $n$.  Then Lemma~\ref{lem:nf-dim-kh} says that
\[ \Delta_K(t) = -2t + 5 - 2t^{-1}, \]
so $\det(K) = 9$, and that
\[ \dim \bar{H}(K;\Q) = \dim \bar{H}_2(K;\Q) = \dim \Khr(K;\Q) = 9. \]
In particular, when $N=2$ the spectral sequence \eqref{eq:rasmussen-ss} collapses, and so by \cite[Theorem~1]{rasmussen-differentials} the bigrading on $\bar{H}_2(K)$ is completely determined by the triple grading on $\bar{H}(K)$.  This means that
\[ \Khr(K;\Q) \cong \Khr(P(-3,3,2n+1);\Q) \]
as bigraded vector spaces.

Moreover, by Lemma~\ref{lem:nf-khi} we know that $K$ has genus one, with $\dim \KHI(K,1;\Q) = 2$.  Proposition~\ref{prop:nf-list} says that $K$ must therefore be some pretzel knot $P(-3,3,2m+1)$, or $15n_{115646}$ or its mirror.  (We note that $P(-3,3,2m+1)$ is the mirror of $P(-3,3,-2m-1)$.)  But again by \eqref{eq:dim-kh-nf} the reduced Khovanov homologies of $15n_{115646}$ and its mirror are not 9-dimensional, so in fact $K$ cannot be either of these knots.  So now we have
\[ K = P(-3,3,2m+1) \]
for some $m\in \Z$, and all of these pretzel knots are distinguished by the bigradings on their Khovanov homologies, by \cite[Theorem~4.1]{starkston-pretzel} or more generally \cite[Theorem~3.2]{hedden-watson}.  Thus $m=n$ after all.
\end{proof}

\bibliographystyle{alpha}
\bibliography{References}

\end{document}